\documentclass[twoside,12pt,a4paper]{article} %
\usepackage{authblk}
\usepackage{float}
\usepackage{amsmath,amssymb,amsfonts}
\usepackage[standard]{ntheorem}
\usepackage{sectsty}
\usepackage{graphicx}
\usepackage{xcolor}

\usepackage{caption}
\usepackage{subcaption}

\usepackage[nocompress]{cite}

%Bilder positionieren
\setcounter{topnumber}{3}
\setcounter{bottomnumber}{3}
\setcounter{totalnumber}{5}

\setlength{\topmargin}{0mm}
\setlength{\topskip}{1mm}
\setlength{\oddsidemargin}{1.5mm}
\setlength{\evensidemargin}{-2.5mm}
\setlength{\textwidth}{160mm}
\setlength{\textheight}{230mm}
\setlength{\footskip}{15mm}
\setlength{\headheight}{15pt}
\setlength{\unitlength}{1cm}
\sloppy

\setlength{\parindent}{0.0mm}         % Kein Einzug des Textes am Anfang
\setlength{\parskip}{\baselineskip}   % dto

\allsectionsfont{\sffamily}
\newcommand{\R}{\ensuremath{\mathbb{R}}}
\newcommand{\scp}[2]{\langle #1, #2 \rangle}

\def\R{{\mathbb R}}

\DeclareMathOperator{\diverg}{div}

\DeclareMathOperator{\Id}{Id}

\DeclareMathOperator{\interior}{int}

\DeclareMathOperator{\sign}{sign}

\DeclareMathOperator{\supp}{supp}
\DeclareMathOperator{\tr}{tr}

\newcommand{\Barg}[1]{\Bigl(#1\Bigr)}
\newcommand{\nset}[1]{\{#1\}}

\newcommand{\nbar}[1]{|#1|}

\newcommand{\bfm}{\mathbf}

\newcommand{\bfeps}{\boldsymbol{\varepsilon}}
\newcommand{\bfsigma}{\boldsymbol{\sigma}}

\newcommand{\be}{\begin{equation}}
\newcommand{\ee}{\end{equation}}
\newcommand{\dev}{\mathrm{dev}}

\newcommand{\D}{\displaystyle}

\DeclareOldFontCommand{\sc}{\normalfont\scshape}{\@nomath\sc}
\numberwithin{equation}{section}

\def\psfiles{.}

\begin{document}

\title{Analytical and numerical relaxation results for models in soil mechanics}
\author[1]{Florian Behr}
\author[1]{Georg Dolzmann}
\author[2]{Klaus Hackl}
\author[2]{Ghina Jezdan}

\affil[1]{\small Faculty of Mathematics; University of Regensburg, 93053 Regensburg, Germany}

\affil[2]{\small Institute of Mechanics of Materials; Ruhr University Bochum, 44780 Bochum, Germany}

\maketitle

\begin{abstract}
A variational model of pressure-dependent plasticity employing a time-incremental setting is introduced. A novel formulation of the dissipation potential allows one to construct the condensed energy in a variationally consistent manner. For a one-dimensional model problem, an explicit expression for the quasiconvex envelope can be found which turns out to be essentially independent of the original pressure-dependent yield surface. The model problem can be extended to higher dimensions in an empirical manner. Numerical simulation exhibit well-posed behavior showing mesh-independent results.
\end{abstract}

\section{Introduction}

Classical models for soils and granular materials are formulated in the framework of linearized plasticity based on pressure-dependent flow rules. While this approach works very well in engineering applications, it cannot explain complex structures seen in experiments conducted in granular media like clay, silt or sand, see in particular \cite[Figure~12]{WOLF20031229}. 
The aim of this paper is both the investigation and the modeling of these complex structures via relaxation methods, a well-established approach based on variational analysis. In a first step, the analysis will be based on a time discretization scheme
which involves a variational formulation for the problems at given finite time-increments. The corresponding variational 
problems will be studied concerning their stability properties and the main features of minimizing microstructures, which are closely related to the complexity of the experimental data. 
Moreover, the dependence of the main features of the minimizing structures on the given yield functions will be 
investigated. The goal is to employ the variational approach in order to describe the essential model features with as few parameters as possible, thus 
establishing this type of models for plasticity in soil mechanics.

Starting with \cite{BallJames1987,BallJames1992,ChipotKinderlehrer1988} it was 
realized that experimentally observed microstructures may be explained based on energy minimization. 
This observation made it possible to use variational methods in order to understand complex 
material behavior. For example, the shape memory effect is related to an accommodation 
of macroscopic deformations on a microscopic scale through complex patterns. 
These patterns are determined through mathematical compatibility conditions 
which require in the nonlinear theory that two adjacent elastic strains $\mathbf{A}$ and $\mathbf{B}$
be rank-one connected in the sense that $\mathbf{A} - \mathbf{B} = \mathbf{a}\otimes \mathbf{n}$. 
The vector $\mathbf{n}$ provides geometric information since 
it represents the orientation of planes that can connect the two states $\mathbf{A}$ and $\mathbf{B}$.
Minimization of the original energy with respect to such compatible microstructures leads to the relaxed energy. This approach allows one on the one hand to formulate well-posed boundary-value problems that lend themselves to numerical solutions. The minimizing microstructures, on the other hand, will provide information on the expected deformation patterns. 
Applications of this approach specifically to elastoplastic materials can be found in \cite{kochmann2011evolution,hackl2012}.

Since the explicit construction of microstructures requires to determine rank-one connections, 
which is algebraically a challenging problem, it is not surprising that there are only a few 
examples with applications to elastic materials for which a closed form of $W^{qc}$ has been found
\cite{CarstensenPlechac1997,DeSimoneDolzmannARMA2002,Silhavy1999,Silhavy2007,ContiDolzmann2014CubicTetragonal,khan2018,khan2021}.
In the linearized setting, the compatibility condition concerns symmetric stains
$\mathbf{A}$ and $\mathbf{B}$ whose difference is symmetrized rank-one convex, i.e., 
$\mathbf{A}-\mathbf{B} = \mathbf{a}\odot \mathbf{b}$ with 
$\mathbf{a}\odot \mathbf{b} = \frac{1}{2}(\mathbf{a}\otimes \mathbf{b} + \mathbf{b}\otimes \mathbf{a})$.
Examples for relaxation results in this context can be found in 
\cite{KohnStrang1986,LurieChervaev1988,KohnCMAT1991,PipkinQuartJMechApplMath1991}. Numerical relaxation schemes are employed in \cite{bartels2004}.
A combination of an analytical approximation of
the relaxed energy {density} combined with a finite element simulation can be
found in~\cite{CarstensenContiOrlando2008,MieheLambrecht2003,MieheLambrechtGuerses04}. A novel algorithmic approach for the computation of the relaxed energy {density}
with applications to problems in phase transformations and models in plasticity 
was recently proposed in \cite{ContiDolzmannJMPS2018,ContiDolzmannJOTA2018,ContiDolzmannJOTAZwei2021}, see also 
\cite{BartelsM2AN2004,BartelsSIAM2005}. 

The paper is organized as follows: In Sec.~\ref{sec2} standard models of pressure dependent plasticity are reviewed. In Sec.~\ref{sec3} we bring these models into a variational setting suitable for treatment within relaxation theory. In Sec.~\ref{sec4} this setting is specialized in order to obtain a one-dimensional model problem for which analytical results can be obtained. Sec.~\ref{sec5} contains the main results of this paper from the mathematical point of view. An explicit expression for the relaxed energy is derived in the one-dimensional case, i.e., the quasiconvex envelope of the energy density, which, in the given situation with one spatial variable, is equal to the convex envelope. In Sec.~\ref{sec6} the one-dimensional relaxed energy is, in a heuristic manner,  extended to three dimensions and in Sec.~\ref{sec7} numerical results are presented for both models, first for the relaxed energy derived for the one-dimensional problem, then for an extension of this energy to three-dimensions, for which no rigorous mathematical results are available yet. More extensive results will be given in \cite{jezdan2023}. We will close with conclusion and outlook in Sec.~\ref{sec8}.

\section{Pressure-dependent plasticity}
\label{sec2}

The most important class of materials exhibiting pressure-dependent plastic behavior are soils and granular media. They are usually heterogeneous mixtures of fluids (usually air and water) and particles (usually clay, silt, sand, and gravel) that have little to no cementation \cite{mitchell2005fundamentals}. The shear strength is provided by friction and interlocking of the particles.  

The understanding of deformations of solid materials is of key importance in applications. In contrast to 
elastic materials, which return to their original configuration upon unloading, elasto-plastic materials 
may undergo 
a permanent change of their configuration if large shear strains occur in the material; in contrast, 
it is frequently assumed that they may 
support arbitrary hydrostatic pressures. Usually this material behaviour is modeled in a linearized setting via a convex set $\mathbb{K}\subset \R^{n\times n}_{\mathrm{sym}}$
which describes the set of admissible stresses in the material which lead to elastic deformations. Only for stresses 
on $\partial \mathbb{K}$, permanent plastic deformation occurs and this deformation is formulated via a flow rule which
determines the evolution of the plastic part of the total elastoplastic deformation. Consequently, the set $\mathbb{K}$
is an infinite cylinder 
\begin{align}
\mathbb{K}=\bigcup\limits_{\pi \in \R} \pi\mathbf{Id} \oplus K=\bigcup\limits_{\pi=\pi \in \R}^{\pi_\mathrm{max}}\{ \pi \mathbf{Id} + \mathbf{E},  \mathbf{E}\in K \}\,,
\end{align}
where $K$ is a compact subset in $\R^{n\times n}_{\mathrm{D}}$ and
\begin{align}
\R^{n\times n}_{\mathrm{D}} = \{ \mathbf{E}\in \R^{n\times n}\colon \mathbf{E} = \mathbf{E}^T, \, \tr(\mathbf{E})=0\}
\end{align}
is the set of all symmetric and trace free matrices. Here and in the following we will use boldface symbols for 
vector-valued or matrix-valued objects.

Thus the basic problem in the theory of linear elastoplasticity can be formulated as follows. Given a material body described 
in its reference configuration by a domain $\Omega\subset\R^n$, find a deformation $\mathbf{y}:\Omega\to \R^n$ and 
a decomposition of the symmetrized displacement gradient $\boldsymbol{\epsilon}$ defined by 
\begin{align}
\mathbf{y}(\mathbf{x}) = \mathbf{id}(\mathbf{x}) + \mathbf{u}(\mathbf{x})\,,\quad 
\boldsymbol{\epsilon}(\mathbf{x}) = \frac{1}{2}( D\mathbf{u}(\mathbf{x}) + D\mathbf{u}(\mathbf{x})^T)
\end{align}
into an elastic and a plastic part, $\boldsymbol{\epsilon}=\boldsymbol{\epsilon}_e + \boldsymbol{\epsilon}_p$, 
$\boldsymbol{\epsilon}_p \in \R^{n\times n}_{\mathrm{D}}$, such that the following system of equations holds:
\begin{itemize}
	\item [(i)] constitutive equation $\boldsymbol{\sigma} = \mathbb{C}\boldsymbol{\epsilon}_e$,
	\item [(ii)] balance of momentum $\mathbf{u}_{tt}-\diverg \boldsymbol{\sigma} = \mathbf{f}$ 
	or the equilibrium condition $-\diverg\boldsymbol{\sigma} = \mathbf{f}$,
	\item [(iii)] stress constraint $\boldsymbol{\sigma} \in \mathbb{K}$,
	\item [(iv)] associated flow rule $(\boldsymbol{\xi} -\boldsymbol{\sigma}):\dot{\boldsymbol{\epsilon}}_p \leq 0$
	for every $\boldsymbol{\xi}\in \mathbb{K}$.
\end{itemize}

It is important to note that the assumption (iv) is not realistic for materials in which frictional effects dominate as in 
clay, sands, gravel or other granular materials. Here pressure-dependence needs to be taken into account 
and this leads to a flow rule that cannot be modeled by an infinite cylinder; instead one is led to 
consider cones in the space of principle stresses, classical models being the Drucker-Prager and 
Mohr-Coulomb criteria \cite{Lubliner2008plasticity}. To be specific, let $\pi=- 1/3 \, \tr\bfsigma$ denote the pressure, 
then the convex elastic domain is given as
\begin{align}
\mathbb{K}=\bigcup\limits_{\pi=\pi_\mathrm{min}}^{\pi_\mathrm{max}} \pi\mathbf{Id} \oplus K_\pi=\bigcup\limits_{\pi=\pi_\mathrm{min}}^{\pi_\mathrm{max}}\{ \pi \mathbf{Id} + \mathbf{E},  \mathbf{E}\in K_\pi \}\,,
\end{align}
where $K_\pi$ 
is a compact subset in $\R^{n\times n}_{\mathrm{D}}$ and $-\infty\leq\pi_\mathrm{min}<\pi_\mathrm{max}\leq\infty$.
In order to comply with experimental observations, plastic deformation should still be volume preserving. For this purpose, the flow rule (iv) is then replaced by the non-associated flow rule and the full system of equations 
in its quasistatic form is given by
\begin{itemize}
	\item [(i)] constitutive equation $\boldsymbol{\sigma} = \mathbb{C}\boldsymbol{\epsilon}_e$,
	\item [(ii)] the equilibrium condition $-\diverg\boldsymbol{\sigma} = \mathbf{f}$,
	\item [(iii)] stress constraint $\boldsymbol{\sigma} \in \mathbb{K}$,
	\item [(iv')] non-associated flow rule $(\boldsymbol{\xi} -\boldsymbol{\sigma}):\dot{\boldsymbol{\epsilon}}_p \leq 0$
	for every $\xi\in \mathbb{K}_\pi$.
\end{itemize}
Note that $\dot{\boldsymbol{\epsilon}}_p$ is now normal to the boundary of $\mathbb{K}_\pi$ as subset of $\R^{n\times n}_{\mathrm{D}}$
but not normal to $\mathbb{K}$ as subset of $\R^{n\times n}_{\mathrm{sym}}$. 
This observation lies at the heart of the problems concerning a variational formulation to be discussed in the following.

\section{Variational model}
\label{sec3}

In order to be able to apply relaxation theory, we have to reformulate the setting given in the previous section. For this purpose, we will employ a variational approach for the description of inelastic processes, see \cite{moreau1968,carstensen2002non-convex,miehe2002homogenization,ortiz1999nonconvex,HACKL1997667,hackl2008relation}.
Let us consider a physical system described by (sets of) external, i.e., controllable, state variables, in our case given by the total strain $\bfeps$, and internal state variables $\mathbf{z}$. Here $\mathbf{z}=\{\bfeps_\mathrm{p},p\}$, where $\bfeps_\mathrm{p}$ denotes plastic strain and $p\geq 0$ a hardening variable.
We assume that the system behavior may be defined using only two scalar potentials: free energy $\psi(\bfeps,\mathbf{z})$ and dissipation potential $\Delta(\bfeps,\mathbf{z},\dot{\mathbf{z}})$.

The evolution of the internal variables is described by 
\begin{equation}
\label{eq1}
{\mathbf 0} \in \frac{\partial \psi}{\partial {\mathbf{z}}}+\frac{\partial \Delta}{\partial \dot{\mathbf{z}}}.
\end{equation}
Note, that Eq.~\eqref{eq1} may be written as a stationarity condition of the minimization problem
\begin{equation}
\label{eq2}
\dot{\mathbf{z}} = \arg\inf_{\dot{\mathbf{z}}}\left\{{\dot{\psi} + \Delta}\right\}.
\end{equation}

We assume a free energy of the form
\be
\label{eq3}
\psi =  \frac{K}{2} \, {\tr \bfeps}^2 + \mu \left\|\dev \bfeps  - \bfeps_\mathrm{p} \right\|^2 + \rho(\tr \bfeps) \, p + \frac{\beta}{2} \left\|\bfeps_\mathrm{p} \right\|^2.
\ee
Here, $p$ is an auxiliary internal variable, $K$ the bulk-modulus, $\mu$ the shear-modulus, and $\beta$ a hardening-modulus. The function $\rho(\tr \bfeps)$ governs pressure-dependence. For constant $\rho(\tr \bfeps)$, Eq.~\eqref{eq3} corresponds to the free energy of an isotropic elastoplastic material with kinematic hardening. In this work, we will assume $\rho$ to be a concave function inside an interval $[\xi_\mathrm{min},\xi_\mathrm{max}]$ and zero outside of it. The function $\rho(\tr \bfeps)$ for a typical model, e.g. capped Drucker-Prager, see \cite{mitchell2005fundamentals}, can be seen in Fig.~\ref{fig-rho(xi)}. The stress is given as
\begin{figure}
\centering
\includegraphics[scale=1.2]{\psfiles/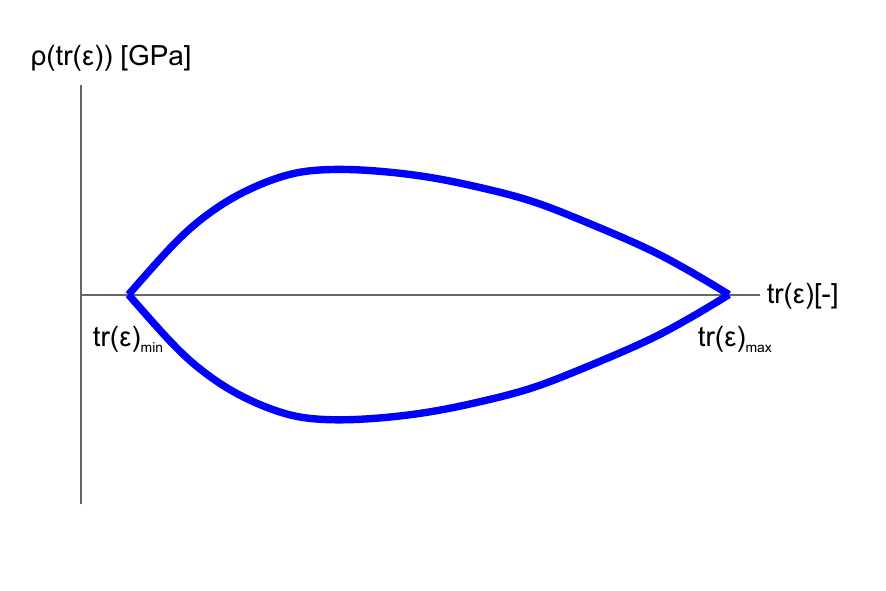} 
\caption{typical pressure-dependent yield surface. \label{fig-rho(xi)}}
\end{figure}
\begin{equation}
 \label{eq3a}
\bfsigma = \frac{\partial \psi}{\partial \bfeps} = (K \, \tr\bfeps +\rho^\prime(\tr\bfeps) \, p) \mathbf{I} + 2\mu \left(\dev \bfeps  - \bfeps_\mathrm{p} \right).
\end{equation}
Note, that the pressure is given by the relation $\pi = - K \tr \bfeps - \rho^\prime(\tr\bfeps) p$. Here, the second term may be interpreted as a consolidation pressure due to an inelastic volume change governed by the parameter $p$. A variationally more consistent approach would be to replace  Eq.~\eqref{eq3} by an energy of the form
\be
\label{eq3b}
\psi_\mathrm{consol} =  \frac{K}{2} \, (\tr \bfeps - \theta_\mathrm{p})^2 + \mu \left\|\dev \bfeps  - \bfeps_\mathrm{p} \right\|^2 + \rho(\theta_\mathrm{p}) \, p + \frac{\beta}{2} \left\|\bfeps_\mathrm{p} \right\|^2,
\ee	
where $\theta_\mathrm{p}$ now represents the inelastic volume change explicitly. However, we would like to keep our formulation simple within the present work and plan to discuss this formulation in a subsequent paper.

The dissipation potential will be assumed as
\be
\label{eq4}
\Delta(\dot{\bfeps}_\mathrm{p},\dot p) = 
\begin{cases}
0 & \text{for  } \|\dot{\bfeps}_\mathrm{p}\| \leq \dot p \\
\infty & \text{otherwise}
\end{cases}.
\ee
This procedure differs from the standard one, as for example implemented in \cite{carstensen2002non-convex}. In the standard approach, Eq.~\eqref{eq3} is replaced by
\be
\label{eq3c}
\psi_\mathrm{alt} =  \frac{K}{2} \, {\tr \bfeps}^2 + \mu \left\|\dev \bfeps  - \bfeps_\mathrm{p} \right\|^2 + p + \frac{\beta}{2} \left\|\bfeps_\mathrm{p} \right\|^2,
\ee
and Eq.~\eqref{eq4} by
\be
\label{eq4a}
\Delta_\mathrm{alt}(\tr \bfeps,\dot{\bfeps}_\mathrm{p},\dot p) = \rho(\tr\bfeps)
\begin{cases}
	0 & \text{for  } \|\dot{\bfeps}_\mathrm{p}\| \leq \dot p \\
	\infty & \text{otherwise}
\end{cases}.
\ee
However, the formulation in Eq.~\eqref{eq4a} would be problematic with respect to the variational formulation which does not allow the dissipation potential to depend on an external variable. Moreover, it would lead to complications when it comes to integrating it up in order to obtain a dissipation distance, as will be done later in the text. Minimizing trajectories connecting different states will be non-monotone leading to behavior that is questionable from a physical point of view.

Minimization in Eq.~\eqref{eq2} gives, taking into account the expressions in Eqs.~\eqref{eq3} and \eqref{eq4},
\be
\label{eq5}
\dev\bfsigma - \beta \bfeps_\mathrm{p} \in \rho(\tr \bfeps) \sign \dot{\bfeps}_\mathrm{p},
\ee
and
\be
\label{eq6}
\dot p = \|\dot{\bfeps}_\mathrm{p}\|.
\ee
Note, that Eq.~\eqref{eq5} is equivalent to the yield condition
\be
\label{eq7}
\Phi := \|\dev\bfsigma - \beta \bfeps_\mathrm{p}\| - \rho(\tr \bfeps) \leq 0,
\ee
and the flow rule
\be
\label{eq8}
\dot{\bfeps}_\mathrm{p} = \kappa \, (\dev\bfsigma - \beta \bfeps_\mathrm{p}),
\ee
where $\kappa\geq 0$ is a consistency parameter satisfying the Kuhn-Tucker conditions
\be
\label{eq9}
\kappa\geq 0, \qquad \Phi \leq 0, \qquad \kappa \, \Phi =0.
\ee
Note, that Eq.~\eqref{eq6} defines $p$ as equivalent plastic strain.

Having established a variational scheme for pressure-dependent plasticity now, we will employ a time-incremental approach as proposed in \cite{carstensen2002non-convex,ortiz1999nonconvex} in order to investigate the evolution of material microstructures.
For this purpose, we consider a time-increment $\Delta t=[t_n,t_{n+1}]$. Given the external forces and boundary conditions at time $t_{n+1}$ and the values of the internal variables $\mathbf{z}_n$ at time $t_n$, we seek the values $\bfeps_{n+1}$ and $\mathbf{z}_{n+1}$ at time $t_{n+1}$.

A time-incremental version of the dissipation-potential is given by the dissipation-distance
The extremal principle Eq.~\eqref{eq2} can now be replaced by
\be
\label{eq11}
\{\bfeps_{\mathrm{p},n+1},p_{n+1}\} = \arg\inf_{\bfeps_\mathrm{p},p}\left\{{\psi(\bfeps,\bfeps_\mathrm{p},p) + D(\bfeps_{\mathrm{p},n},p_n,\bfeps_\mathrm{p}},p)\right\}.
\ee
Minimization with respect to $p$ gives immediately
\be
\label{eq14}
p - p_n = \|\bfeps_\mathrm{p} - \bfeps_{\mathrm{p},n}\|.
\ee
Then, substitution of Eq.~\eqref{eq14} into Eq.~\eqref{eq11} and minimization with respect to $\bfeps_\mathrm{p}$ yields
\be
\label{eq13}
2\mu (\dev\bfeps - \bfeps_\mathrm{p}) - \beta \bfeps_\mathrm{p} \in \rho(\tr \bfeps) \sign (\bfeps_\mathrm{p} - \bfeps_{\mathrm{p},n}).
\ee
Solving for $\bfeps_\mathrm{p}$ in Eq.~\eqref{eq13}yields
\be
\label{eq15}
\bfeps_\mathrm{p} = \bfeps_{\mathrm{p},n} + \frac{1}{2\mu+\beta} \, \left[2\mu \|\dev\bfeps - \bfeps_{\mathrm{p},n}\| - \rho(\tr \bfeps)  \right]_+ \sign (\dev\bfeps - \bfeps_{\mathrm{p},n}),
\ee
where $\left[x\right]_+=\max\{0,x\}$. This allows to introduce the condensed energy, see \cite{carstensen2002non-convex,ortiz1999nonconvex}, via
\be
\label{eq16}
\psi_\mathrm{cond}(\bfeps,\bfeps_{\mathrm{p},n},p_n) = \inf_{\bfeps_\mathrm{p},p}\left\{{\psi(\bfeps,\bfeps_\mathrm{p},p) + D(\bfeps_{\mathrm{p},n},p_n,\bfeps_\mathrm{p}},p)\right\}.
\ee
Substitution of Eqs.~\eqref{eq14} and \eqref{eq15} into Eq.~\eqref{eq16} gives, up to an irrelevant constant term of the form $\frac{\beta}{2} \left\|\bfeps_{\mathrm{p},n} \right\|^2$,
\begin{multline}
\label{eq17}
\psi_\mathrm{cond}(\bfeps,\bfeps_{\mathrm{p},n},p_n) = 
\frac{K}{2} \, {\tr \bfeps}^2 + \mu \left\|\dev \bfeps  - \bfeps_{\mathrm{p},n} \right\|^2 + \rho(\tr \bfeps) p_n\\
- \frac{1}{2} \, \frac{1}{2\mu+\beta} \, \left[2\mu \|\dev\bfeps - \bfeps_{\mathrm{p},n}\| - \rho(\tr \bfeps)  \right]_+^2.
\end{multline}

The condensed energy can now be considered as formally elastic and dependent on the parameters $\bfeps_{\mathrm{p},n},p_n$, governing the behavior during the time-increment $\Delta t=[t_n,t_{n+1}]$. At the end of the time-step the internal variables can be updated using Eqs.~\eqref{eq14} and \eqref{eq15}. This way, the inelastic behavior can be approximated by solving a sequence of elastic problems.

\section{A one-dimensional model problem}
\label{sec4}

It is well-known that quasiconvex envelopes can be calculated analytically only in rare special cases. In order to circumvent this difficulty, we restrict ourselves to a one-dimensional problem for which the quasiconvex envelope coincides with the convex envelope. We consider a state of specific deformation corresponding to a thin sheet of material under uniform tension or compression and shear. We assume the sheet to be supported by stiff fibers in a way that prevents bending deformations. The non-vanishing displacements and strains are in Cartesian coordinates
\be
\label{eq18}
u_x = u(x), \qquad u_y = v(x)+y \, u^\prime(x), \qquad u_z = z \, u^\prime(x)
\ee
\be
\label{eq19}
\varepsilon_{xx} = \varepsilon_{yy} = \varepsilon_{zz} = u^\prime, \qquad \varepsilon_{yx} = \varepsilon_{xy} = \frac{1}{2} \, v^\prime,
\ee
where we neglected terms of order $u^{\prime\prime}$.
For the plastic strains, we assume that only the components $\varepsilon_{\mathrm{p}} := \varepsilon_{\mathrm{p}yx} = \varepsilon_{\mathrm{p}xy}$ are different from zero. The free energy assumes the form
\be
\label{eq20}
\psi = \frac{K}{2} \, \varepsilon_{xx}^2 + \mu \left( \left( \varepsilon_{yx} - \varepsilon_{\mathrm{p}yx} \right)^2 + \left( \varepsilon_{xy} - \varepsilon_{\mathrm{p}xy} \right)^2 \right) 
+ \rho(\varepsilon_{xx}) \, p + \beta \, \varepsilon_{\mathrm{p}xy}^2.
\ee
Substitution of Eq.~\eqref{eq19} into Eq.~\eqref{eq20} gives
\be
\label{eq21}
\psi = \frac{K}{2} \left(u^\prime \right)^2 + 2 \, \mu \left(  \frac{1}{2} v^\prime - \varepsilon_\mathrm{p} \right)^2
+ \rho\left(u^\prime\right) \, p + \beta \, \varepsilon_\mathrm{p}^2.
\ee
As before, minimization with respect to $\varepsilon_{\mathrm{p}}$ and $p$ once again leads to the condensed energy
\begin{multline}
\label{eq25}
\psi_\mathrm{cond}(u^\prime,v^\prime,\varepsilon_{\mathrm{p},n},p_n) = \frac{K}{2} \, \left(u^\prime \right)^2 + 2 \, \mu \, \left( \frac{1}{2} \, v^\prime -\varepsilon_{\mathrm{p},n} \right)^2 + \rho\left(u^\prime \right) p_n\\
- \frac{1}{2} \, \frac{1}{2\mu+\beta} \, \left[2\mu \left|\frac{1}{2} \, v^\prime - \varepsilon_{\mathrm{p},n}\right| - \rho\left(u^\prime \right) \right]_+^2.
\end{multline}
For the remainder of the paper, we will focus on the case $p_n=0$. This will simplify the mathematical treatment significantly and will allow to keep the derivations stringent and instructive. In mechanical terms this means, that we will describe the initiation of microstructures rather than their evolution. However, the general case will remain subject of future investigations.

It is convenient for the mathematical treatment following in the subsequent section to introduce a dimensionless formulation. For this purpose, we define modified variables and quantities by
\be
\label{eq26}
y_1 = \sqrt{\frac{K}{2\mu}} \, u^\prime, \quad y_2 = \frac{1}{2} \, v^\prime, \quad r(y_1) = \frac{1}{2\mu} \, \rho\left(u^\prime \right), \quad b = \frac{\beta}{2\mu},
\ee
giving, using $p_n=0$,
\begin{multline}
\label{eq27}
\psi_\mathrm{cond}(y_1,y_2,\varepsilon_{\mathrm{p},n},p_n) =
2\mu \, \left[ \frac{1}{2} \, y_1^2 + \frac{1}{2} \left( y_2 -\varepsilon_{\mathrm{p},n} \right)^2
- \frac{1}{2} \, \frac{1}{b+1} \, \left[ \, |y_2  - \varepsilon_{\mathrm{p},n}| - r(y_1) \right]_+^2  \right].
\end{multline}

Finally we may drop the multiplicative constant $2\mu$ and write $z_n =  \varepsilon_{\mathrm{p},n}$. Since $\psi_\mathrm{cond}$ is the free energy density in the variational functional that needs to be minimized in the first time step, it is crucial to understand the convexity properties of $\psi_\mathrm{cond}$ in order to employ the direct method in the calculus of variations. It turns out that the concavity of $r$ leads to a lack of convexity of $\psi_\mathrm{cond}$ and the approach via relaxation~\cite{Dacorogna1989} requires one to characterize the quasiconvex envelope of $\psi_\mathrm{cond}$, which, in the scalar case, reduces to the convex envelope. This task is accomplished in the next section. 

\section{Characterization of the relaxed energy for the one-dimensional model}
\label{sec5}

For any function $f: \R^2\to \R$ we denote the convex envelope of $f$ with $f_\mathrm{c}$ and, if $f$ is differentiable at $y\in \R^2$, the linear Taylor polynomial of $f$ about the point $y$ with $T_y f$. Fix $b>0$ and $y_{\mathrm{min}}, y_{\mathrm{max}}\in \R$ with $y_{\mathrm{min}}< y_{\mathrm{max}}$. Define the set of admissible dissipation functions  by 
\begin{align}
 \mathcal{R} = \nset{ r\in C(\R, [0,\infty)),\, \supp(r)= [y_{\mathrm{min}},y_{\mathrm{max}}], r|_{[y_{\mathrm{min}},y_{\mathrm{max}}]}\text{ concave} }\,.
\end{align}
Fix $y_{\mathrm{mid}}:= (y_{\mathrm{min}}+y_{\mathrm{max}})/2$ and define the piecewise affine function $r_0\in \mathcal{R}$ by
\begin{align}
r_0(y_1)= \sqrt{b}\cdot \left( \frac{y_{\mathrm{max}}-y_{\mathrm{min}}}{2}- \nbar{ y_1 - y_{\mathrm{mid}} } \right)\cdot \chi_{[y_{\mathrm{min}},y_{\mathrm{max}}]}(y_1)\,.
\end{align}
The condensed energy for some $r\in \mathcal{R}$ is then given by 
\begin{align}\label{FormulasCondensedEnergy}
f^{(r)}_{z_n}(y) = \frac{1}{2}(y_1^2 + (y_2- z_n)^2) - \frac{(|y_2- z_n|- r(y_1))_+^2}{2(b+1)}\,.
\end{align}
Since $y_2$ appears only shifted by $z_n$ it is sufficient to calculate the convex envelope $f_\mathrm{c}^{(r)}$ of $f^{(r)}:= f^{(r)}_0$ and obtain the convex envelope of $f^{(r)}_{z_n}(y_1,y_2)$ by $f_\mathrm{c}^{(r)}(y_1, y_2- z_n)$.\\
If the dissipation function is apparent from the context, we simply write $f$ instead of $f^{(r)}$. Define the disjoint sets 
\begin{align}\label{def:regions}
\begin{aligned}
  \widetilde{Y} & = \nset{ y\in \R^2 \colon y_1 \leq y_{\mathrm{min}} \text{ or }y_1 \geq y_{\mathrm{max}} }\,, \\
 Y_0 & =\nset{ y\in \R^2 \colon y_1\in (y_{\mathrm{min}},y_{\mathrm{max}}),\, |y_2|< r(y_1) }\,, \\ 
 Y_\pm & = \nset{ y\in \R^2\colon y_1\in (y_{\mathrm{min}},y_{\mathrm{max}}),\,\pm y_2 \geq r(y_1) }\,,
\end{aligned}
\end{align}
whose union is $\R^2$, see Figure~\ref{fig:DefinitionModelProblemEnergy}.

\begin{figure}[h]
\centering 
\includegraphics[width=.85\textwidth]{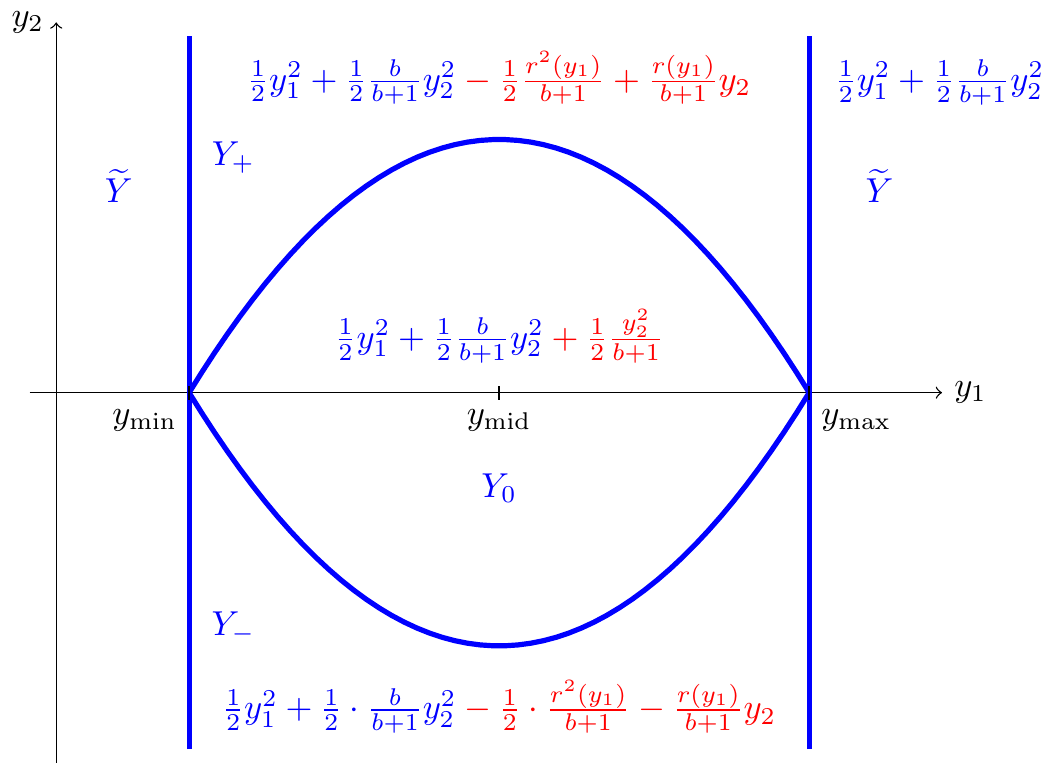}
\caption{Definition of the condensed energy for the model problem. The set $Y_0$ in Eq.~\eqref{def:regions} is enclosed by the blue curves ($\pm r$), the sets $Y_\pm$ correspond to the sets above and below $Y_0$, respectively, and the set $\widetilde{Y}$ is the complement of these three sets in $\R^2$. \label{fig:DefinitionModelProblemEnergy}}
\end{figure}

\begin{lemma}\label{lem:OrderCondensedEnergy}
Consider two dissipation functions $r, \bar{r}\in \mathcal{R}$. If $y_1\in \R$ and $r(y_1)\leq \bar{r}(y_1)$, then for all $y_2\in \R$ the inequality $f^{(r)}(y_1, y_2)\leq f^{(\bar{r})}(y_1, y_2)$ holds. In particular, if $r\leq \bar{r}$, then $f^{(r)}\leq f^{(\bar{r})}$ and $g: \R^2\to \R,\ g(y)= \frac{1}{2}y_1^2 + \frac{1}{2} \frac{b}{b+1} y_2^2$ is a convex lower bound of $f^{(r)}$, which coincides on $\widetilde{Y}$ with $f^{(r)}$.
\end{lemma}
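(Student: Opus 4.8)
The plan is to exploit the observation that the dissipation function $r$ enters $f^{(r)}$ only through the single subtracted term $(|y_2|-r(y_1))_+^2/(2(b+1))$, so that every assertion of the lemma reduces to understanding how the scalar map $s\mapsto (t-s)_+^2$ behaves in $s$. Concretely, I would fix $y_1$ and $y_2$, write $t=|y_2|\geq 0$, and analyze $\phi(s):=(t-s)_+^2$ for $s\geq 0$. A one-line check shows $\phi$ is non-increasing: on $[0,t]$ one has $\phi(s)=(t-s)^2$ with $\phi'(s)=-2(t-s)\leq 0$, and on $(t,\infty)$ one has $\phi\equiv 0$.

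With this in hand, Part~1 is immediate. Since $r,\bar r$ are nonnegative and $r(y_1)\leq \bar r(y_1)$, monotonicity of $\phi$ gives $\phi(r(y_1))\geq \phi(\bar r(y_1))$; as this term appears in $f^{(r)}$ with a negative sign and the remaining summand $\tfrac12(y_1^2+y_2^2)$ is independent of the dissipation function, subtracting yields $f^{(r)}(y_1,y_2)\leq f^{(\bar r)}(y_1,y_2)$ for every $y_2$. The ``in particular'' statement $f^{(r)}\leq f^{(\bar r)}$ then follows by applying this at each $y_1\in\R$.

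For the lower bound I would note that $g$ is exactly $f^{(r)}$ evaluated with $r$ replaced by the identically zero function: inserting $r\equiv 0$ collapses the positive part to $(|y_2|)_+^2=y_2^2$ and produces $g(y)=\tfrac12 y_1^2+\tfrac12\frac{b}{b+1}y_2^2$. Because every admissible $r\in\mathcal{R}$ satisfies $r\geq 0$, the same monotonicity of $\phi$ (now comparing $s=r(y_1)$ against $s=0$) gives $(|y_2|-r(y_1))_+^2\leq y_2^2$ and hence $g\leq f^{(r)}$. Convexity of $g$ is clear, since it is a quadratic form with nonnegative coefficients $1$ and $b/(b+1)>0$ (here one uses $b>0$).

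Finally, for the coincidence on $\widetilde Y$ I would invoke the support condition $\supp(r)=[y_{\mathrm{min}},y_{\mathrm{max}}]$. For $y_1<y_{\mathrm{min}}$ or $y_1>y_{\mathrm{max}}$ one has $r(y_1)=0$ by definition of the support, and at the two endpoints continuity of $r$ forces $r(y_{\mathrm{min}})=r(y_{\mathrm{max}})=0$, since a nonzero value there would make $r$ nonvanishing on a neighborhood extending beyond $[y_{\mathrm{min}},y_{\mathrm{max}}]$, contradicting the stated support. Thus $r$ vanishes on all of $\widetilde Y$, the positive-part term equals $y_2^2$, and $f^{(r)}$ reduces there exactly to $g$. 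No step presents a genuine obstacle; the only point deserving a moment of care is this endpoint vanishing, which is why I would state the continuity-plus-support argument explicitly rather than treat it as obvious.
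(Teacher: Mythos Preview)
Your proposal is correct and follows essentially the same approach as the paper: both reduce the comparison to the monotonicity of the single subtracted term, the paper via the chain $|y_2|\geq |y_2|-r(y_1)\geq |y_2|-\bar r(y_1)$ and monotonicity of $x\mapsto (x)_+^2$, you via monotonicity of $s\mapsto (t-s)_+^2$. Your write-up is simply more explicit about convexity of $g$ and the endpoint vanishing of $r$, points the paper leaves implicit.
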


\begin{proof}
For any $y\in \mathbb{R}^2$ with $r(y_1)\leq \bar{r}(y_1)$ it follows that $|y_2|\geq |y_2|- r(y_1)\geq |y_2|- \overline{r}(y_1)$. Therefore we have $y_2^2\geq (|y_2|- r(y_1))_+^2\geq (|y_2|- \overline{r}(y_1))_+^2$ and the inequality
\begin{align}
g(y)\leq f^{(r)}(y_1, y_2)\leq f^{(\overline{r})}(y_1, y_2)
\end{align}
is obtained from Eq.~\eqref{FormulasCondensedEnergy}.
\end{proof}

Since Lemma~\ref{lem:OrderCondensedEnergy} establishes a lower bound on $f_\mathrm{c}$, we may use the results in \cite[Theorem~2.1]{GriewankRabier1990}. For a lower semicontinuous function $f: \R^d\to \R$, which satisfies the growth condition $\lim_{|\delta|\to \infty} f(\delta)/|\delta|= \infty$, the convex envelope of $f$ is a real-valued convex function and for any $y\in \R^d$ the value $f_\mathrm{c}(y)$ is a convex combination of at most $d+1$ function values $f(y_i)$, $1\leq i\leq q\leq d+1$.
Moreover, the subgradient $\partial f_\mathrm{c}(y)$ is nonempty and for any $v\in \partial f(y)$
\begin{align} \label{ConvEnv1}
f(y_i)- \scp{v}{y_i}= f(y)- \scp{v}{y}\,, \qquad i\in \{1,...,q\}
\end{align}
and
\begin{align} \label{ConvEnv2}
\forall \bar{y}\in \R^d: f(\bar{y})- \scp{v}{\bar{y}}\geq f(y_i)- \scp{v}{y_i}\,, \qquad i\in \{1,...,q\}\,.
\end{align}
Since Eq.~\eqref{ConvEnv1} implies $f(y_i)- \scp{v}{y_i}= f(y_j)- \scp{v}{y_j}$ for all $i,j\in \{1,...,q\}$ and Eq.~\eqref{ConvEnv2} is equivalent to $v\in \partial f(y_i)$, a natural approach to calculate the convex envelope of a continuous function $f: \R^d\to \R$ is therefore to find pairwise distinct points $y_1,...,y_q\in \R^d$ ($q\leq d+1$), which satisfy the subgradient condition
\begin{align} \label{SubgradientCondition} \tag{SG}
\exists v\in \bigcap\limits_{i=1}^q \partial f(y_i): \forall i,j\in \{1,...,q\}: f(y_i)- \scp{v}{y_i}= f(y_j)- \scp{v}{y_j}
\end{align}
and define for all $y\in \text{Conv}(\{y_1,...,y_q\})$, the convex hull of the points $y_1,\ldots,y_q$, the convex envelope of $f$ by 
\begin{align} \label{ConstructionConvexEnvelope}
y= \sum\limits_{i=1}^q \lambda_i y_i\,,\quad f_\mathrm{c}(y)= \sum\limits_{i=1}^q \lambda_i f(y_i)\,.
\end{align}
Finally, if for some direction $w\in \R^d$ and some $y\in \R^d$ the directional derivative $D_w f(y)$ exists, then $v\in \partial f(y)$ with $|v|=1$ implies $\scp{v}{w}= D_w f(y)$. Furthermore, if $f$ is differentiable in $y$, then $v\in \partial f(y)$ implies $v= \nabla f(y)$. This leads to the observation that, if $y_1,...,y_q\in \R^d$ is a solution of Eq.~\eqref{SubgradientCondition} and if $f$ is differentiable in some $y_i$, then for all $y\in \text{Conv}(\{y_1,...,y_q\})$ the convex envelope of $f$ is given by
\begin{align} \label{ConstructionConvexEnvelopeDifferentiable}
f_\mathrm{c}(y)= (T_{y_i}f)(y)= f(y_i)+ \scp{\nabla f(y_i)}{y- y_i}\,.
\end{align}
The calculation of the subgradient involves an evaluation of the values of $f$ on the whole space, and the solution of Eq.~\eqref{SubgradientCondition} is in general challenging.

Based on these observations, for the construction of the convex envelope of the condensed energy $f^{(r)}$ we first derive pairs and triples of points $y_1,...,y_q\in \R^2$ ($q= 2,3$), which have equal partial derivatives in the $y_2$ direction in which $f^{(r)}$ is differentiable. Then we derive a candidate for the direction $v\in \R^d$ satisfying the equation in Eq.~\eqref{SubgradientCondition}. However, at this point we do not know whether this $v$ in Eq.~\eqref{SubgradientCondition} is indeed a subgradient of $f$ in all the points $y_1, ..., y_q$. For those pairs and triples for which we expect this to be true, we do not show explicitly that the first part in Eq.~\eqref{SubgradientCondition} is satisfied, rather we apply the construction described in Eqs.~\eqref{ConstructionConvexEnvelope} and \eqref{ConstructionConvexEnvelopeDifferentiable} and prove in Theorem~\ref{thm:RelaxationSmallb} that this construction leads to the convex envelope. It is a remarkable result of the subsequent analysis that the explicit formulas for the relaxed energy show a very different behavior with respect to the dependence on the dissipation function $r$. In fact, for small values of $b$ relative to $r$, i.e., for
\begin{align} \label{ConditionSmallb}
 \sqrt{b} \cdot \frac{y_{\mathrm{max}}-y_{\mathrm{min}}}{2} \leq r\Barg{ \frac{y_{\mathrm{min}}+y_{\mathrm{max}}}{2} }\,,
\end{align}
which is the case of relevance in soil mechanics, the relaxed energy is in fact independent of $r$. Therefore we restrict the analysis in this paper to the case of small $b$, the general case will be treated in~\cite{Behr2023}. In fact, this assumption implies $r_0\leq r$ by concavity of $r$. In the following we consider $r$ as fixed, write $f= f^{(r)}$ and assume without loss of generality that $r_0<r$ on $(y_{\mathrm{min}},y_{\mathrm{max}})$.

In order to implement the strategy just outlined, we construct a candidate $f_\mathrm{a}$ for $f_\mathrm{c}$. In a first step, we search for points with common supporting planes in $Y_0$ and $\widetilde{Y}$, see Figure~\ref{fig:DefinitionModelProblemEnergy}. Assume thus that $(y,\widetilde{y})\in Y_0\times \widetilde{Y}$ satisfy Eq.~\eqref{SubgradientCondition}. Since $f$ is differentiable in $y$, $v= \nabla f(y)$ is the only possible choice for $v$ in Eq.~\eqref{SubgradientCondition}. If $\widetilde{y}\in \interior(\widetilde{Y})$, then $f$ is differentiable in $\widetilde{y}$ and $\nabla f(y)= v\in \partial f(\widetilde{y})\subset \{\nabla f(\widetilde{y})\}$ implies $\widetilde{y}_1= \partial_1 f(\widetilde{y})= \partial_1 f(y)= y_1$, a contradiction. Consequently we assume that $\widetilde{y}\in \partial\widetilde{Y}$, that is, $\widetilde{y}_1\in \{y_{\mathrm{min}}, y_{\mathrm{max}}\}$. The existence of $\partial_2 f(\widetilde{y})$ implies $\partial_2 f(\widetilde{y})= v_2= \partial_2 f(y)$ and equivalently
\begin{align}
\frac{b}{b+1}\widetilde{y}_2 = y_2 \quad \Leftrightarrow\quad 
 \widetilde{y}_2 - y_2 = \frac{1}{b}y_2\,.
\end{align}
The equation in Eq.~\eqref{SubgradientCondition} is equivalent to $f(\widetilde{y}) = (T_y f)(\widetilde{y})$. By definition of $f$ and since $f|_{Y_0}$ is a quadratic function, these conditions lead to
\begin{align}
\frac{1}{2}\widetilde{y}_1^2 + \frac{1}{2} \frac{b}{b+1} \widetilde{y}_2^2
= f(\widetilde{y})= (T_y f)(\widetilde{y})
= \frac{1}{2}\widetilde{y}_1^2 + \frac{1}{2} \widetilde{y}_2^2 - \frac{1}{2}\nabla^2f(y)[\widetilde{y}-y, \widetilde{y}-y]\,.
\end{align}
In view of $\nabla^2 f(y) = \Id$, the algebraic condition can be simplified to 
\begin{align}
 (\widetilde{y}_1 - y_1)^2 + (\widetilde{y}_2 - y_2)^2 = \frac{1}{b+1}\widetilde{y}_2^2\,.
\end{align}
One defines $s= |\widetilde{y}_1-y_1|$ and substitutes the relation between $\widetilde{y}_2$ and $y_2$ to obtain 
\begin{align}
 y_2 = \pm\sqrt{b} s \quad \Leftrightarrow\quad 
 y_2 = \pm \sqrt{b} s\,.
\end{align}
Since $\widetilde{y}_1\in \{y_{\mathrm{min}}, y_{\mathrm{max}}\}$, one obtains four one-parameter families of pairs of points,
\begin{align}
\alpha_{\mathrm{min}}^\pm(s) = \left(\begin{array}{cc} y_{\mathrm{min}}+ s\\ \pm \sqrt{b}\cdot s \end{array} \right)&\,,\quad 
\beta_{\mathrm{min}}^\pm(s) = \left(\begin{array}{cc} y_{\mathrm{min}}\\ \pm \frac{b+1}{\sqrt{b}} s \end{array} \right)\,,\\
\alpha_{\mathrm{max}}^\pm(s) = \left(\begin{array}{cc} y_{\mathrm{max}}- s\\ \pm \sqrt{b}\cdot s \end{array} \right)&\,,\quad 
\beta_{\mathrm{max}}^\pm(s) = \left(\begin{array}{cc} y_{\mathrm{max}}\\ \pm \frac{b+1}{\sqrt{b}} s \end{array} \right)\,.
\end{align}
where $s$ can take all values in $(y_{\mathrm{min}},y_{\mathrm{max}})$ with $\alpha_{\mathrm{min}}^\pm(s)\in Y_0$ (or $\alpha_{\mathrm{max}}^\pm(s)\in Y_0$). For the construction of the convex envelope only the values $0< s< (y_{\mathrm{max}}- y_{\mathrm{min}})/2=: s_\ast$ will be relevant, since the curves $\alpha_{\mathrm{min}}^\pm$ and $\alpha_{\mathrm{max}}^\pm$ intersect in the point
\begin{align}
y_\ast^\pm:= \alpha_{\mathrm{min}}^\pm(s_\ast)= \alpha_{\mathrm{max}}^\pm(s_\ast)= (y_{\mathrm{mid}},\pm \sqrt{b}(y_{\mathrm{max}}- y_{\mathrm{min}})/2)\in Y_0\,.
\end{align}
For each $y\in \R^2$, which lies on a connecting straight line of two corresponding points of the curves $\alpha_{\mathrm{min}}^\pm$ and $\beta_{\mathrm{min}}^\pm$, i.e., $y= t\cdot \alpha_{\mathrm{min}}^\pm(s)+ (1-t)\cdot \beta_{\mathrm{min}}^\pm(s)$ with $s\in (0,s_\ast)$ and $t\in [0,1]$, we define $f_\mathrm{a}(y)= t\cdot f(\alpha_{\mathrm{min}}^\pm(s))+ (1-t)\cdot f(\beta_{\mathrm{min}}^\pm(s))$ according to Eq.~\eqref{ConstructionConvexEnvelope} and we use the same construction for $\alpha_{\mathrm{max}}^\pm$ and $\beta_{\mathrm{max}}^\pm$. Note, that $f_\mathrm{a}$ is affine along these connecting lines and the area, which is covered by those lines consists of four triangular shaped sets denoted by $Y_2$ in Figure~\ref{fig:constructionrelaxation}. The fact that $\alpha_{\mathrm{min}}^\pm$ and $\alpha_{\mathrm{max}}^\pm$ intersect in $y_\ast^\pm$ means that the tangential plane of $f$ in $y_\ast^\pm$ touches the graph in two other points, namely
\begin{align}
y_{\ast \mathrm{min}}^\pm&:= \beta_{\mathrm{min}}^\pm(s_\ast)= (y_{\mathrm{min}},\pm \frac{b+1}{\sqrt{b}}(y_{\mathrm{max}}- y_{\mathrm{min}})/2)\,,\\
y_{\ast \mathrm{max}}^\pm&:= \beta_{\mathrm{max}}^\pm(s_\ast)= (y_{\mathrm{max}},\pm \frac{b+1}{\sqrt{b}}(y_{\mathrm{max}}- y_{\mathrm{min}})/2)\,.
\end{align}
Since the pairs $(\alpha_{\mathrm{min}}^\pm(s),\beta_{\mathrm{min}}^\pm(s))$ and $(\alpha_{\mathrm{max}}^\pm(s),\beta_{\mathrm{max}}^\pm(s))$ satisfy the equality in Eq.~\eqref{SubgradientCondition} for all $s\in (0,s_\ast)$, the triples $(y_\ast^\pm,y_{\ast \mathrm{min}}^\pm,y_{\ast \mathrm{max}}^\pm)$ also satisfy Eq.~\eqref{SubgradientCondition}. Therefore we define $f_\mathrm{a}$ for $y$ in one of the triangles $\text{Conv}(\{y_\ast^+, y_{\ast \mathrm{min}}^+, y_{\ast \mathrm{max}}^+\})\cup \text{Conv}(\{y_\ast^-, y_{\ast \mathrm{min}}^-, y_{\ast \mathrm{max}}^-\})$ ($Y_3$ in Figure~\ref{fig:constructionrelaxation}) according to Eq.~\eqref{ConstructionConvexEnvelopeDifferentiable} as the unique affine function which coincides with $f$ in the vertices of each of these triangles,
\begin{align} \label{ConstructionOnY3}
 f_\mathrm{a}(y)= \left\{ \begin{array}{cl} (T_{y_\ast^+}f)(y)&\text{ on }\text{Conv}(\{y_\ast^+, y_{\ast \mathrm{min}}^+, y_{\ast \mathrm{max}}^+\})\,,\\ (T_{y_\ast^-}f)(y)&\text{ on }\text{Conv}(\{y_\ast^-, y_{\ast \mathrm{min}}^-, y_{\ast \mathrm{max}}^-\})\,. \end{array} \right.
\end{align}
Finally, consider a pair of two different points $(\widetilde{y}^{(1)}, \widetilde{y}^{(2)})\in \partial \widetilde{Y}$ satisfying Eq.~\eqref{SubgradientCondition}. For the existence of a supporting hyperplane of $f$, which touches the graph of $f$ in $\widetilde{y}^{(1)}$ and $\widetilde{y}^{(2)}$, we require equal partial derivatives in the second component, i.e.,
\begin{align}
\widetilde{y}^{(1)}_2= \frac{b+1}{b} \partial_2 f(\widetilde{y}^{(1)})= \frac{b+1}{b} \partial_2 f(\widetilde{y}^{(2)})= \widetilde{y}^{(2)}_2\,.
\end{align}
With $\widetilde{y}^{(1)}_1, \widetilde{y}^{(2)}_1\in \{y_{\mathrm{min}},y_{\mathrm{max}}\}$ and $\widetilde{y}^{(1)}\neq \widetilde{y}^{(2)}$ we get $\widetilde{y}^{(1)}= y_{\mathrm{min}}$ and $\widetilde{y}^{(2)}_1= y_{\mathrm{max}}$ and obtain for $s> 0$ the pairs of curves
\begin{align}
\alpha_\infty^\pm(s):= (y_{\mathrm{min}},\pm s)\,,\quad
\beta_\infty^\pm(s):= (y_{\mathrm{max}},\pm s)\,.
\end{align}
Since we already constructed our candidate $f_\mathrm{a}$ for the convex envelope on the set $(y_{\mathrm{min}},y_{\mathrm{max}})\times [- \frac{b+1}{\sqrt{b}} s_\ast, \frac{b+1}{\sqrt{b}} s_\ast]$, we restrict $s$ to have values $s> \frac{b+1}{\sqrt{b}} s_\ast$. For each $y\in \R^2$, which lies on a connecting straight line of two corresponding points of the curves $\alpha_\infty^\pm$ and $\beta_\infty^\pm$, i.e., $y= t\cdot \alpha_\infty^\pm(s)+ (1-t)\cdot \beta_\infty^\pm(s)$ with $s\in (\frac{b+1}{\sqrt{b}} s_\ast,\infty)$ and $t\in [0,1]$ (represented by the two unbounded sets forming $Y_4$ in Figure~\ref{fig:constructionrelaxation}), we define $f_\mathrm{a}(y)= t\cdot f(\alpha_\infty^\pm(s))+ (1-t)\cdot f(\beta_\infty^\pm(s))$ according to Eq.~\eqref{ConstructionConvexEnvelope}. We summarize the result of the construction in the following theorem; the explicit formulas in the different regions will be verified in the proof.

\begin{theorem} \label{thm:RelaxationSmallb}
Define 
\begin{align}
 Y_1 &= \{y\in \R^2,\ |y_2|< r_0(y_1)\},\\
 Y_2& = \{y\in (y_{\mathrm{min}},y_{\mathrm{max}})\times \R,\ r_0(y_1)\leq |y_2|< \frac{b+1}{\sqrt{b}} \frac{y_{\mathrm{max}}-y_{\mathrm{min}}}{2}- \frac{r_0(y_1)}{b}\},\\
 Y_3& = \{y\in (y_{\mathrm{min}},y_{\mathrm{max}})\times \R,\ \frac{b+1}{\sqrt{b}} \frac{y_{\mathrm{max}}-y_{\mathrm{min}}}{2}- \frac{r_0(y_1)}{b}\leq |y_2|\leq \frac{b+1}{\sqrt{b}} \frac{y_{\mathrm{max}}-y_{\mathrm{min}}}{2}\},\\
 Y_4& = \{y\in (y_{\mathrm{min}},y_{\mathrm{max}})\times \R,\ \frac{b+1}{\sqrt{b}} \frac{y_{\mathrm{max}}-y_{\mathrm{min}}}{2}< |y_2|\},
\end{align}
as shown in Figure~\ref{fig:constructionrelaxation}. Then the convex envelope $f_\mathrm{c}$ of $f$ is given by 
\begin{align}
 f_\mathrm{c}(y) = \left\{\begin{array}{cl} 
\displaystyle \frac{1}{2}y_1^2+ \frac{1}{2}\frac{b}{b+1}y_2^2 & \text{ on }\widetilde{Y}\,,\\[.1in]
\displaystyle \frac{1}{2}y_1^2+ \frac{1}{2}y_2^2 & \text{ on } Y_1\,, \\[.1in]
\displaystyle \frac{1}{2}y_1^2 + \frac{1}{2}y_2^2- \frac{(|y_2|- r_0(y_1))^2}{2(b+1)} & \text{ on }Y_2\,,\\[.1in]
\displaystyle  \begin{aligned} &\frac{1}{2}y_1^2+ \frac{1}{2}y_2^2- \frac{(|y_2|- r_0(y_1))^2}{2(b+1)}\\ 
\displaystyle &\qquad - \frac{1}{2} \frac{b}{b+1} \left( |y_2|- \frac{b+1}{\sqrt{b}} \frac{y_{\mathrm{max}}-y_{\mathrm{min}}}{2}+ \frac{r_0(y_1)}{b} \right)^2 \end{aligned} & \text{ on }Y_3\,,\\[.1in]
\displaystyle \frac{1}{2}y_1^2+ \frac{1}{2}\frac{b}{b+1}y_2^2+ \frac{1}{2}(y_1- y_{\mathrm{min}})\cdot (y_{\mathrm{max}}- y_1)& \text{ on }Y_4\,.
\end{array} \right.
\end{align}

\end{theorem}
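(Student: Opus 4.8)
The plan is to establish the two inequalities $f_\mathrm{c}\leq f_\mathrm{a}\leq f_\mathrm{c}$ for the candidate $f_\mathrm{a}$ assembled region by region above, after first reducing everything to the piecewise affine dissipation function $r_0$. Every construction point — the points on the curves $\alpha_{\mathrm{min}}^\pm,\beta_{\mathrm{min}}^\pm,\alpha_{\mathrm{max}}^\pm,\beta_{\mathrm{max}}^\pm,\alpha_\infty^\pm,\beta_\infty^\pm$ together with the triple vertices $y_\ast^\pm,y_{\ast\mathrm{min}}^\pm,y_{\ast\mathrm{max}}^\pm$ — lies either on the graph $\{\,\nbar{y_2}=r_0(y_1)\,\}$ or in $\widetilde{Y}$. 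Since $r_0\leq r$, at every such point $(\nbar{y_2}-r(y_1))_+=(\nbar{y_2}-r_0(y_1))_+$, so $f^{(r)}$ and $f^{(r_0)}$ agree there and $f_\mathrm{a}$ is literally the same object whether it is built from $r$ or from $r_0$. Moreover, by the defining interpolation in Eqs.~\eqref{ConstructionConvexEnvelope}--\eqref{ConstructionConvexEnvelopeDifferentiable} one has, for every $y$, $f_\mathrm{a}(y)=\sum_i\lambda_i f(y_i)$ with $\sum_i\lambda_i y_i=y$ and $y_i$ among the construction points; since $f_\mathrm{c}$ is convex and $f_\mathrm{c}\leq f$, this already yields $f_\mathrm{c}(y)\leq\sum_i\lambda_i f_\mathrm{c}(y_i)\leq\sum_i\lambda_i f(y_i)=f_\mathrm{a}(y)$.

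For the reverse inequality I would produce, for every $y\in\R^2$, an affine function $\ell_y$ that is a \emph{global} minorant of $f^{(r_0)}$ and satisfies $\ell_y(y)=f_\mathrm{a}(y)$. Because $f^{(r_0)}\leq f^{(r)}=f$ by Lemma~\ref{lem:OrderCondensedEnergy}, such an $\ell_y$ is an affine minorant of $f$, whence $f_\mathrm{a}(y)=\ell_y(y)\leq f_\mathrm{c}(y)$; together with the previous paragraph this proves $f_\mathrm{a}=f_\mathrm{c}$, and in particular that $f_\mathrm{a}$ is convex. The functions $\ell_y$ are exactly the supporting planes underlying the construction: the tangent plane $T_y g$ of the lower bound $g$ on $\widetilde{Y}$; the tangent plane $T_y f^{(r_0)}$ of the paraboloid $\tfrac12(y_1^2+y_2^2)$ on $Y_1$; on $Y_2$ and $Y_4$ the plane carrying the common subgradient $v(s)=\nabla f^{(r_0)}(\alpha(s))$ of the ruling segment through $y$; and on $Y_3$ the tangent plane $T_{y_\ast^\pm}f^{(r_0)}$, which touches the graph at the three vertices. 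In each case $\ell_y(y)=f_\mathrm{a}(y)$ is immediate from the construction.

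It remains to verify that each $\ell_y$ is a global minorant of $f^{(r_0)}$, that is, that the candidate $v$ really lies in $\partial f^{(r_0)}$ at the contact points — precisely the part of the subgradient condition~\eqref{SubgradientCondition} postponed during the construction. After inserting the piecewise linear form of $r_0$ the difference $f^{(r_0)}-\ell_y$ is piecewise quadratic, and on the affine pieces the estimate reduces to nonnegativity of an explicit square: on $Y_3$ one finds $f^{(r_0)}-f_\mathrm{a}=\tfrac{b}{2(b+1)}\big(\nbar{y_2}-\tfrac{b+1}{\sqrt b}\tfrac{y_{\mathrm{max}}-y_{\mathrm{min}}}{2}+\tfrac{r_0(y_1)}{b}\big)^2\geq0$, while on $Y_4$ a completion of the square using the defining bound $\nbar{y_2}>\tfrac{b+1}{\sqrt b}\tfrac{y_{\mathrm{max}}-y_{\mathrm{min}}}{2}$ and the identity $(y_1-y_{\mathrm{min}})(y_{\mathrm{max}}-y_1)=\big(\tfrac{y_{\mathrm{max}}-y_{\mathrm{min}}}{2}\big)^2-(y_1-y_{\mathrm{mid}})^2$ gives $f^{(r_0)}-f_\mathrm{a}\geq\tfrac{1}{2(b+1)}\big(\tfrac{y_{\mathrm{max}}-y_{\mathrm{min}}}{2}-\nbar{y_1-y_{\mathrm{mid}}}\big)^2\geq0$. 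As a consistency check one also computes the interior Hessians, namely $\Id$ on $Y_1$, $\diag(1,\tfrac{b}{b+1})$ on $\widetilde{Y}$, the rank-one matrix $\tfrac{1}{b+1}\big(\begin{smallmatrix}1&\pm\sqrt b\\ \pm\sqrt b& b\end{smallmatrix}\big)$ on $Y_2$, the zero matrix on $Y_3$ (confirming that $f_\mathrm{a}$ is affine there), and $\diag(0,\tfrac{b}{b+1})$ on $Y_4$; these are all positive semidefinite, but by themselves they do not give convexity, since $f^{(r_0)}$ carries a concave crease along $y_1=y_{\mathrm{mid}}$ for large $\nbar{y_2}$ — exactly the crease that the ruled and affine pieces on $Y_3, Y_4$ are designed to remove.

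I expect the genuine obstacle to be the global minorant property at contact points on $\partial\widetilde{Y}$, i.e.\ on the lines $y_1\in\{y_{\mathrm{min}},y_{\mathrm{max}}\}$. There $f^{(r_0)}$ is only Lipschitz, its first partial derivative in the $y_1$ direction jumping across $\partial\widetilde{Y}$, so $\partial f^{(r_0)}$ is a nondegenerate interval and one must check that the candidate slope $v(s)$ lands inside it. The slope $\sqrt b$ of $r_0$ is exactly what forces $v(s)=\nabla f^{(r_0)}(\alpha(s))$ onto the correct endpoint of this interval at the matching point $\beta(s)$, so that the supporting plane touches the graph in two points and stays below $f^{(r_0)}$ on both branches $\nbar{y_2}<r_0(y_1)$ and $\nbar{y_2}>r_0(y_1)$. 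The cleanest way to organize the whole verification — and to obtain continuity of $f_\mathrm{a}$ together with the correct direction of the gradient jumps across the interfaces $\widetilde{Y}\,|\,Y_2$, $Y_1\,|\,Y_2$, $Y_2\,|\,Y_3$ and $Y_3\,|\,Y_4$ — is through the one-parameter family $s\mapsto v(s)$, whose continuity across the transition values $s=s_\ast$ and $s=\tfrac{b+1}{\sqrt b}s_\ast$ ties the five regional formulas into a single convex function.
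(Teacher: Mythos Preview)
Your route to $f_\mathrm{a}\leq f_\mathrm{c}$ is genuinely different from the paper's. You aim to exhibit, for every $y$, an affine minorant $\ell_y$ of $f^{(r_0)}$ with $\ell_y(y)=f_\mathrm{a}(y)$, i.e.\ to verify the global subgradient condition~\eqref{SubgradientCondition} at every construction point. The paper deliberately sidesteps this: it first proves that $f_\mathrm{a}$ is \emph{convex} (Step~1), then shows $f_\mathrm{a}\leq f$ (Step~3), and concludes $f_\mathrm{a}\leq f_\mathrm{c}$ from the maximality of $f_\mathrm{c}$ among convex minorants. The convexity proof is purely local --- continuity of $\nabla f_\mathrm{a}$ across the interfaces together with the positive semidefinite Hessians you also computed --- and is promoted to global convexity by the $C^1$-gluing Lemmas~\ref{lem:ConvexUnionSets}, \ref{lem:LocallyConvexIsGloballyConvex} and \ref{lem:ConvexityOnConvexSubsetSufficient}. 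The inequalities $f^{(r_0)}-f_\mathrm{a}\geq 0$ on $Y_3$ and $Y_4$ that you derive are exactly the paper's Step~3, but there they serve only to bound $f_\mathrm{a}$ by $f$, not to certify any supporting plane.

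Your approach is sound in principle, but the step you yourself identify as the obstacle is not actually carried out, and there is a conflation in the third paragraph: showing $f^{(r_0)}-f_\mathrm{a}\geq 0$ \emph{on} $Y_3$ is not the same as showing that the affine extension of $f_\mathrm{a}\big|_{Y_3}$ lies below $f^{(r_0)}$ on \emph{all} of $\R^2$, which is what ``$\ell_y$ is a global minorant'' requires. For each candidate plane you would have to test it against $f^{(r_0)}$ on every region, including across the creases at $y_1\in\{y_{\mathrm{min}},y_{\mathrm{mid}},y_{\mathrm{max}}\}$ and $|y_2|=r_0(y_1)$. This is doable but tedious, and it is precisely the work that the paper's detour through convexity of $f_\mathrm{a}$ replaces: once $f_\mathrm{a}$ is convex and $\leq f$, the existence of global supporting planes is automatic and never has to be checked by hand.
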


\begin{proof}
Denote by $f_\mathrm{a}$ the formula on the right-hand side in the assertion of the theorem. We are going to show in Step~1 that $f_\mathrm{a}$ is differentiable on $\R^2\setminus (\{y_{\mathrm{min}},y_{\mathrm{max}}\}\times \R)$ and convex. In Step~2 we verify that $f_\mathrm{a}$ is indeed the expression we obtain from the previously described construction and $f_\mathrm{c}\leq f_\mathrm{a}$. Finally in Step~3 we prove $f_\mathrm{a}\leq f$ and conclude, that $f_\mathrm{a}$ coincides with $f_\mathrm{c}$ by $f_\mathrm{c}\leq f_\mathrm{a}\leq f$ and $f_\mathrm{a}$ being convex.
\medskip

\textit{Step 1: Differentiability and convexity of $f_\mathrm{a}$.} The differentiability of $f_\mathrm{a}$  on $\interior(\widetilde{Y})\cup (y_{\mathrm{min}},y_{\mathrm{max}})\times \R$ follows from 
\begin{align}
\nabla f_\mathrm{a}(y) = \left\{\begin{array}{cl} 
\displaystyle (y_1, \frac{b}{b+1}y_2) & \text{ on }\mathring{\widetilde{Y}}\,,\\[.1in]
\displaystyle (y_1, y_2) & \text{ on } Y_1\,, \\[.1in]
\displaystyle (y_1+ \frac{\sign(y_{\mathrm{mid}}- y_1) \sqrt{b}}{b+1}(|y_2|- r_0(y_1)), \frac{b}{b+1} y_2+ \frac{\sign(y_2) r_0(y_1)}{b+1}) & \text{ on }Y_2\,,\\[.1in]
\displaystyle (y_{\mathrm{mid}}, \sign(y_2) \sqrt{b}\cdot \frac{y_{\mathrm{max}}-y_{\mathrm{min}}}{2}) & \text{ on }Y_3\,,\\[.1in]
\displaystyle (y_{\mathrm{mid}}, \frac{b}{b+1}y_2) & \text{ on }Y_4\,,\end{array} \right.
\end{align}
and verifying the continuity of $\nabla f_\mathrm{a}$ along the boundaries of the different regions. With the notation $e_1\odot e_2:= \frac{1}{2} (e_1\otimes e_2+ e_2\otimes e_1)$, the second derivatives are given by
\begin{align}
\nabla^2 f_\mathrm{a}(y) = \left\{\begin{array}{cl} 
\displaystyle e_1\otimes e_1+ \frac{b}{b+1}e_2\otimes e_2 & \text{ on }\interior(\widetilde{Y})\,,\\[.1in]
\displaystyle e_1\otimes e_1+ e_2\otimes e_2 & \text{ on } \interior(Y_1)\,, \\[.1in]
\displaystyle \frac{1}{b+1}\cdot \left( e_1\otimes e_1+ \frac{2 \sqrt{b} \sign(y)}{\sign(y_{\mathrm{mid}}- y_1)}e_1\odot e_2 + b e_2\otimes e_2 \right) & \text{ on }\interior(Y_2)\,,\\[.1in]
\displaystyle 0 & \text{ on }\interior(Y_3)\,,\\[.1in]
\displaystyle \frac{b}{b+1}e_2\otimes e_2 & \text{ on }\interior(Y_4)\,.
\end{array} \right.
\end{align}
In fact, $\nabla^2 f_\mathrm{a}$ is positively semidefinite on the interior of each domain $\widetilde{Y}, Y_1, Y_2, Y_3, Y_4$, consequently $f_\mathrm{a}$ is locally convex on those domains. Since $f_\mathrm{a}$ is differentiable on $(y_{\mathrm{min}},y_{\mathrm{max}})\times \R$, an iterative application of Lemma~\ref{lem:ConvexUnionSets} gives us the local convexity (and hence the convexity by Lemma~\ref{lem:LocallyConvexIsGloballyConvex}) of $f_\mathrm{a}$ on $(y_{\mathrm{min}},y_{\mathrm{max}})\times \R$. A short calculation shows, that $g: \R^2\to \R,\ g(y)= \frac{1}{2}y_1^2+ \frac{1}{2}\frac{b}{b+1}y_2^2$ is a convex lower bound of $f_\mathrm{a}$, and since $f_\mathrm{a}$ and $g$ coincide on $\widetilde{Y}$ and $f_\mathrm{a}$ is convex on $\R\setminus \widetilde{Y}= (y_{\mathrm{min}},y_{\mathrm{max}})\times \R$, the convexity of $f_\mathrm{a}$ follows by Lemma~\ref{lem:ConvexityOnConvexSubsetSufficient}.

\textit{Step 2: $f_\mathrm{c}\leq f_\mathrm{a}$.} Since the calculation of the explicit formula of the candidate $f_\mathrm{a}$ derived from the previously described construction is extensive, we just prove, that the given formula is the one, which is obtained by this construction. Refer to Figure~\ref{fig:constructionrelaxation} for a sketch of the various domains.

\textit{On $\widetilde{Y}\cup Y_1$:}
Since by assumption $r_0\leq r$ and hence $Y_1\subset Y_0$, $f_\mathrm{a}$ coincides with $f$ in $Y_1$ and in $\widetilde{Y}$ and $f_\mathrm{c}\leq f= f_\mathrm{a}$, as asserted.

\textit{On $Y_2$:}
The set $Y_2$ is exactly the set of points, which are covered by the straight line segments connecting the corresponding pairs of points on the four pairs of curves $(\alpha_{\mathrm{min}}^\pm, \beta_{\mathrm{min}}^\pm)$ and $(\alpha_{\mathrm{max}}^\pm,\beta_{\mathrm{max}}^\pm)$. Assume $y\in Y_2$ with $y_1< y_{\mathrm{mid}}$ and $y_2
> 0$ (the other cases are analogous). There exists $s\in (0,(y_{\mathrm{max}}- y_{\mathrm{min}})/2)$ and $t\in (0,1)$ with $y= t\cdot \alpha_{\mathrm{min}}^+(s)+ (1-t)\cdot \beta_{\mathrm{min}}^+(s)$. To show that the given expression for $f_\mathrm{a}(y)$ results from the construction described above, we notice that $f_\mathrm{a}(\alpha_{\mathrm{min}}^+(s))= f(\alpha_{\mathrm{min}}^+(s))$ and $f_\mathrm{a}(\beta_{\mathrm{min}}^+(s))= f(\beta_{\mathrm{min}}^+(s))$ and that $f_\mathrm{a}$ is affine along the connecting line between $\alpha_{\mathrm{min}}^+(s)$ and $\beta_{\mathrm{min}}^+(s)$. This can be seen by $\beta_{\mathrm{min}}^+(s)- \alpha_{\mathrm{min}}^+(s)=(s-y_{\mathrm{min}})\cdot (-1,\sqrt{b}^{-1})^T$ and using $y_2$, $y_{\mathrm{mid}}- y_1> 0$ to obtain
\begin{align}
(-1,\sqrt{b}^{-1}) \nabla^2 f_\mathrm{a}(y) \begin{pmatrix} -1 \\ \sqrt{b}^{-1} \end{pmatrix}= \frac{1}{b+1} (-1,\sqrt{b}^{-1}) \begin{pmatrix} 1 & \sqrt{b} \\ \sqrt{b} & b \end{pmatrix} \begin{pmatrix} -1 \\ \sqrt{b}^{-1}\end{pmatrix}= 0\,.
\end{align}
Since $f_\mathrm{c}$ must lie below the linear interpolation, we obtain $f_\mathrm{c}\leq f_\mathrm{a}$.

\textit{On $Y_3$:}
By construction, $f_\mathrm{a}$ and $f$ have the same values in $y_\ast^\pm, y_{\ast \mathrm{min}}^\pm, y_{\ast \mathrm{max}}^\pm$ and on $\text{Conv}(y_\ast^+, y_{\ast \mathrm{min}}^+, y_{\ast \mathrm{max}}^+)$ and $\text{Conv}(y_\ast^-, y_{\ast \mathrm{min}}^-, y_{\ast \mathrm{max}}^-)$ the function $f_\mathrm{a}$ is affine due to $\nabla^2 f_\mathrm{a}= 0$ on $\interior(Y_3)$. By $y_\ast^\pm\in Y_0$, $f$ is differentiable in $y_\ast^\pm$ with $\nabla f(y_\ast^\pm)= \nabla f_\mathrm{a}(y_\ast^\pm)$. Now it follows in view of Eq.~\eqref{ConstructionOnY3} that $f_\mathrm{a}$ is the above constructed function.

\textit{On $Y_4$:}
Similar to the argument on $Y_2$, we recognize that for any $y\in Y_4$ with $\pm y_2> 0$ there is an $s\in (0,\frac{b+1}{\sqrt{b}}(y_{\mathrm{max}}- y_{\mathrm{min}})/2)$ and $t\in (0,1)$ with $y= t\cdot \alpha^\pm(s)+ (1-t)\cdot \beta^\pm(s)$. We have $f_\mathrm{a}(\alpha_\infty^\pm(s))= f(\alpha_\infty^\pm(s))$ and $f_\mathrm{a}(\beta_\infty^\pm(s))= f(\beta_\infty^\pm(s))$, while $f_\mathrm{a}$ is affine along the connecting line between $\alpha_\infty^\pm(s)$ and $\beta_\infty^\pm(s)$ since $\beta_\infty^\pm(s)- \alpha_\infty^\pm(s)= (y_{\mathrm{max}}-y_{\mathrm{min}},0)$ and in the formula of $f_\mathrm{a}$ on $Y_4$ the quadratic term in $y_1$ cancels out.
Finally, we get $f_\mathrm{c}\leq f_\mathrm{a}$ since any value of $f_\mathrm{a}$ is a convex combination of two or three function values of $f$.

\medskip

\textit{Step 3: The inequality $f_\mathrm{a}\leq f$.} We show $f_\mathrm{a}\leq f^{(r_0)}\leq f$, where the second inequality is an immediate consequence of Lemma~\ref{lem:OrderCondensedEnergy}. For the first inequality, recognize, that $f_\mathrm{a}= f^{(r_0)}$ holds on $\widetilde{Y}\cup Y_1\cup Y_2$ and the inequality only has to be shown on $Y_3$ and $Y_4$.\\
On $Y_3$ the estimate follows by $f_\mathrm{a}(y)= f^{(r_0)}(y)- \frac{1}{2} \frac{b}{b+1} \left( |y_2|- \frac{b+1}{\sqrt{b}} \frac{y_{\mathrm{max}}-y_{\mathrm{min}}}{2}+ \frac{r_0(y_1)}{b} \right)^2$.
On $Y_4$ we can calculate
\begin{align}
f^{(r_0)}(y)- \left( \frac{1}{2}y_1^2+ \frac{1}{2}\frac{b}{b+1}y_2^2 \right)
&= \frac{y_2^2}{2(b+1)}- \frac{(|y_2|- r_0(y_1))_+^2}{2(b+1)}
\geq - \frac{r_0(y_1)^2}{2(b+1)}+ \frac{r_0(y_1)}{b+1}|y_2|\\
&\geq - \frac{r_0(y_1)^2}{2b}+ \frac{r_0(y_1)}{\sqrt{b}}\frac{y_{\mathrm{max}}-y_{\mathrm{min}}}{2}\,.
\end{align}
For $y_1\in (y_{\mathrm{min}},y_{\mathrm{mid}}]$ we have
\begin{align}
- \frac{r_0(y_1)^2}{2b}+ \frac{r_0(y_1)}{\sqrt{b}}\frac{y_{\mathrm{max}}-y_{\mathrm{min}}}{2}
&= - \frac{b(y_1-y_{\mathrm{min}})^2}{2b}+ \frac{\sqrt{b}(y_1-y_{\mathrm{min}})}{\sqrt{b}}\cdot \frac{y_{\mathrm{max}}-y_{\mathrm{min}}}{2}\\
&= \frac{1}{2}(y_1-y_{\mathrm{min}})(y_{\mathrm{max}}-y_1)
\end{align}
and for $y_1\in (y_{\mathrm{mid}},y_{\mathrm{max}})$ we have
\begin{align}
- \frac{r_0(y_1)^2}{2b}+ \frac{r_0(y_1)}{\sqrt{b}}\frac{y_{\mathrm{max}}-y_{\mathrm{min}}}{2}
&= - \frac{b(y_{\mathrm{max}}-y_1)^2}{2b}+ \frac{\sqrt{b}(y_{\mathrm{max}}-y_1)}{\sqrt{b}}\cdot \frac{y_{\mathrm{max}}-y_{\mathrm{min}}}{2}\\
&= \frac{1}{2}(y_1-y_{\mathrm{min}})(y_{\mathrm{max}}-y_1)\,,
\end{align}
which gives us the desired estimate on $Y_4$.
\end{proof}

\begin{figure}
\centering 
\includegraphics[width=.45\textwidth]{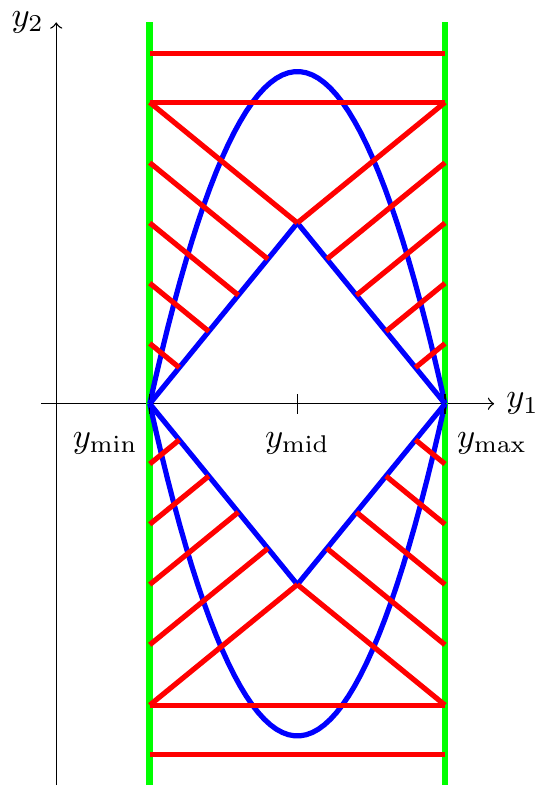}
\hfil
\includegraphics[width=.45\textwidth]{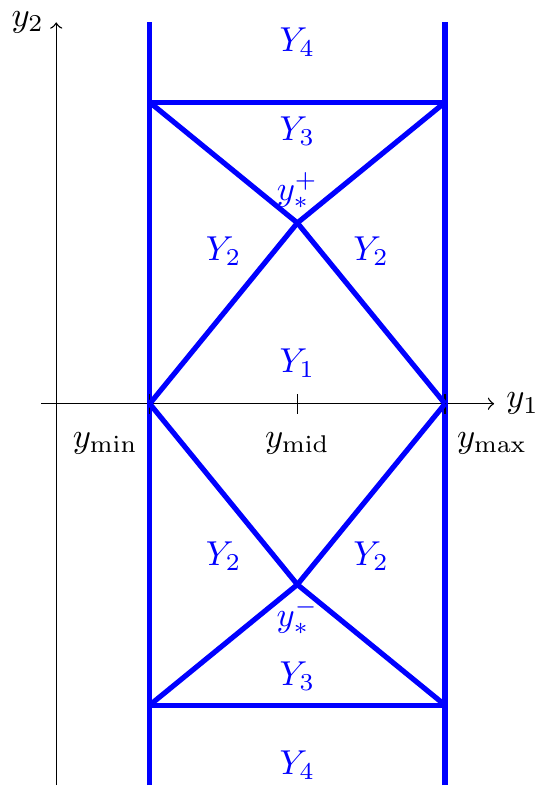}
\caption{Construction of the relaxed energy}\label{fig:constructionrelaxation}
\end{figure}

\begin{corollary} \label{cor:RelaxationSmallb}
For all $r\in \mathcal{R}$ with $\sqrt{b} \cdot \frac{y_{\mathrm{max}}-y_{\mathrm{min}}}{2} \leq r\Barg{ \frac{y_{\mathrm{min}}+y_{\mathrm{max}}}{2} }$ the equality
\begin{align}
f_\mathrm{c}^{(r)}= f_\mathrm{c}^{(r_0)}
\end{align}
holds.
\end{corollary}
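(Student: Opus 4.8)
The plan is to derive $f_\mathrm{c}^{(r)}= f_\mathrm{c}^{(r_0)}$ from a two-sided comparison, exploiting the fact---already visible in Theorem~\ref{thm:RelaxationSmallb}---that the explicit formula for the convex envelope depends on the dissipation function only through $r_0$, and never through $r$ itself. The single structural fact I need beyond the theorem is the monotonicity of the convex-envelope operation: the convex envelope of a pointwise smaller function is pointwise smaller, because the envelope of the smaller function is a convex minorant of the larger one, and the envelope of the larger one is by definition the greatest such minorant.

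First I would prove the inequality $f_\mathrm{c}^{(r_0)}\leq f_\mathrm{c}^{(r)}$. The hypothesis $\sqrt{b}\,(y_{\mathrm{max}}-y_{\mathrm{min}})/2\leq r(y_{\mathrm{mid}})$ together with the concavity of $r$ forces $r_0\leq r$ on $\R$, a reduction already recorded in the discussion preceding Theorem~\ref{thm:RelaxationSmallb}. By Lemma~\ref{lem:OrderCondensedEnergy} this gives $f^{(r_0)}\leq f^{(r)}$ pointwise, and the monotonicity of the envelope then yields $f_\mathrm{c}^{(r_0)}\leq f_\mathrm{c}^{(r)}$.

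For the reverse inequality I would invoke the explicit formula. Theorem~\ref{thm:RelaxationSmallb} asserts $f_\mathrm{c}^{(r)}= f_\mathrm{a}$, where $f_\mathrm{a}$ is the convex, piecewise-defined function in the statement and depends on the data only through $r_0$. Step~3 in the proof of that theorem establishes exactly the estimate $f_\mathrm{a}\leq f^{(r_0)}$. Thus $f_\mathrm{a}= f_\mathrm{c}^{(r)}$ is a convex function lying below $f^{(r_0)}$, and since $f_\mathrm{c}^{(r_0)}$ is the greatest convex minorant of $f^{(r_0)}$ we obtain $f_\mathrm{c}^{(r)}= f_\mathrm{a}\leq f_\mathrm{c}^{(r_0)}$. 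Combining the two inequalities gives the claimed equality.

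I expect no genuine obstacle: the corollary is in essence a bookkeeping consequence of the $r$-independence of the envelope formula, and the comparison above is deliberately arranged so that one never has to re-run the construction of Theorem~\ref{thm:RelaxationSmallb} for the degenerate choice $r=r_0$, for which the simplifying assumption $r_0<r$ used in that proof would fail. The only two points that must be stated with a little care are the monotonicity of $(\cdot)_\mathrm{c}$ and the observation that the inequality $f_\mathrm{a}\leq f^{(r_0)}$ is precisely what Step~3 delivers; both are immediate from material already in hand.
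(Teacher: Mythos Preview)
Your argument is correct and coincides with the paper's proof: the paper records exactly the chain $f_\mathrm{c}^{(r)}\leq f^{(r_0)}\leq f^{(r)}$ extracted from Step~3 of Theorem~\ref{thm:RelaxationSmallb} together with Lemma~\ref{lem:OrderCondensedEnergy}, and then invokes monotonicity of the convex-envelope operation to conclude. Your remark about not needing to rerun the construction for $r=r_0$ is a helpful clarification but does not alter the logic.
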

\begin{proof}
In the proof of the previous theorem we already showed the inequality 
\begin{align}
f_\mathrm{c}^{(r)}\leq f^{(r_0)}\leq f^{(r)}\,,
\end{align}
which implies the claimed equality by the monotonicity in taking convex envelopes.
\end{proof}

\begin{remark}
More generally, one can even show that without any restrictions on $r$ the convex envelope of $f^{(r)}$ does not change while replacing $r$ by $\min\{r,r_0\}$. Corollary~\ref{cor:RelaxationSmallb} is a special case of this statement, since Eq.~\eqref{ConditionSmallb} implies $\min\{r,r_0\}= r_0$ by concavity of $r$ on $(y_{\mathrm{min}}, y_{\mathrm{max}})$.
\end{remark}

\section{Extension of the results to three dimensions}
\label{sec6}

The convex envelope in the one-dimensional case for the density $f(y_1,y_2,\varepsilon_{\mathrm{p},n},p_n)$ given in Theorem \ref{thm:RelaxationSmallb} may be generalized in a natural way to three dimensions by employing a substitution inspired by the transformation of variables defined in Eq.~\eqref{eq26}. In this sense, we define a formally relaxed energy as
\be
\label{eqa1}
\psi_\mathrm{rel}(\bfeps,\bfeps_{\mathrm{p},n},p_n) = f\left(\sqrt{\frac{K}{2\mu}} 
\tr\bfeps,\|\dev\bfeps\|,\bfeps_{\mathrm{p},n},p_n\right).
\ee
Observe that, when applying the same substitution to the one-dimensional condensed energy given in Eq.~\eqref{eq27}, we recover the original three-dimensional condensed energy given in Eq.~\eqref{eq17}.

The relations of the relaxed energies according to Fig.~\ref{fig:constructionrelaxation}, and their derivatives with respect to the strain tensor, i.e., the corresponding relaxed stress tensor, are given below.

Domain $\mathrm{Y_1} $:

\be
\label{eqa3}
\psi_\mathrm{1,rel}= \frac{K}{2} (\tr \bfeps)^2 \,+\, \mu\, \|\, \dev\bfeps \|^2, \quad \sigma_\mathrm{1,rel} = K (\tr \bfeps)\,\mathrm{I} \,+\, 2\mu\,\dev\bfeps \,. \hfill
\ee
Domains $\mathrm{Y_2}$:

\begin{align}\label{eqa5}
	\psi_\mathrm{2,rel}& = \frac{K}{2}(\tr \bfeps)^2 \,+\, \mu\, \| \dev\bfeps\|^2  - \frac{4\mu^2}{2(2\mu+\beta)} \Big(\|\dev\bfeps\| - \frac{1}{2\mu}\rho_0(\tr \bfeps)\Big)^2\,,\\
&\begin{aligned}
\label{eqa5.5}
\sigma_\mathrm{2,rel}  =K (\tr \bfeps)\,\mathrm{I} \,+\, 2\mu\,\dev\bfeps - \,\frac{4\mu^2}{2\mu+\beta}\Big(\|\dev\bfeps\| - \frac{1}{2\mu}\rho_0(\tr\bfeps)\Big)\frac{\dev \bfeps}{\|\dev \bfeps\|} \\ \qquad 
-\,\frac{2\mu\,\sqrt {\beta\, K}}{2\mu+\beta}\Big(\|\dev\bfeps\| - \frac{1}{2\mu}\rho_0(\tr\bfeps)\Big)\,\mathrm{I}\,.
\end{aligned}
\end{align}

Domains $\mathrm{Y_3}$:

\begin{multline}
\label{eqa6}
	\psi_\mathrm{3,rel} =   \frac{K}{2}(\tr \bfeps)^2 \,+\, \mu\, \| \dev\bfeps\|^2  - \frac{4\mu^2}{2(2\mu+\beta)} \Big(\|\dev\bfeps\| - \frac{1}{2\mu}\rho_0(\tr\bfeps)\Big)^2 \\
 - \frac{2\mu \beta}{2(2\mu+\beta)}\Big(\|\dev\bfeps\| + \frac{1}{\beta}\,\rho_0(\tr\bfeps)
  -\frac{2\mu+\beta}{2 \mu} \,\sqrt{\frac{K}{\beta}}\,\frac{\tr \bfeps_\mathrm{max} -  \tr\bfeps_\mathrm{min}}{2} \Big)^2\,,
\end{multline}
\begin{multline}
\label{eqa6.5}
	\sigma_\mathrm{3,rel} = K (\tr \bfeps)\,\mathrm{I} \,+\, 2\mu\,\dev\bfeps - \,\frac{4\mu^2}{2\mu+\beta}\Big(\|\dev\bfeps\| - \frac{1}{2\mu}\rho_0(\tr\bfeps)\Big)\frac{\dev \bfeps}{\|\dev \bfeps\|} \\
-\,\frac{2\mu\,\sqrt {\beta\, K}}{2\mu+\beta}\Big(\|\dev\bfeps\| - \frac{1}{2\mu}\rho_0(\tr\bfeps)\Big)\,\mathrm{I}\\
  -\frac{2\mu\sqrt {\beta\, K}}{2\mu+\beta}\Big(\|\dev\bfeps\| + \frac{1}{\beta}\,\rho_0(\tr\bfeps) 
  -\frac{2\mu+\beta}{2 \mu} \,\sqrt{\frac{K}{\beta}}\,\frac{\tr \bfeps_\mathrm{max} -  \tr\bfeps_\mathrm{min}}{2} \Big)\,\mathrm{I} \\
 - \,\frac{2\mu\beta}{2\mu+\beta}\Big(\|\dev\bfeps\| + \frac{1}{\beta}\,\rho_0(\tr\bfeps) 
  -\frac{2\mu+\beta}{2 \mu} \,\sqrt{\frac{K}{\beta}}\,\frac{\tr \bfeps_\mathrm{max} -  \tr\bfeps_\mathrm{min}}{2}\Big)\frac{ \dev \bfeps}{\| \dev\bfeps   \|}\,.
\end{multline}

Domains $\mathrm{Y_4}$:

\begin{align}
\label{eqa7}
	\psi_\mathrm{4,rel} =  \frac{K}{2 } (\tr \bfeps)^2  + \frac{2 \mu \beta}{2(2\mu+\beta)} \| \dev\bfeps\|^2 + \frac{K}{2} \Big(\tr \bfeps -  \tr\bfeps_\mathrm{min}\Big)\Big(\tr\bfeps_\mathrm{max} - \tr \bfeps \Big)\,, \hfill
\end{align}

\begin{multline}
\label{eqa7.5}
	\sigma_\mathrm{4,rel} = K (\tr \bfeps)\,\mathrm{I} + \frac{2\mu\beta}{2\mu+\beta} \dev \bfeps + K\Big(\frac{\tr\bfeps_\mathrm{max} + \tr\bfeps_\mathrm{min}}{2} - \tr \bfeps \Big)\,\mathrm{I}\,. \hfill
\end{multline}

for

\begin{multline}
\rho_0(\tr\bfeps) = \sqrt{\beta K} \Big(\frac{\tr \bfeps_\mathrm{max} -  \tr\bfeps_\mathrm{min}}{2} - |\tr(\bfeps) - \frac{\tr \bfeps_\mathrm{max} + \tr\bfeps_\mathrm{min}}{2} |\Big) \hfill
\end{multline}

\section{Numerical results}
\label{sec7}

\subsection{The one-dimensional model problem}

We make use of the dimensionless formulation introduced in Eq.~\eqref{eq26}. The function $r(y_1)$ is selected to fit the yield surface of a modified Drucker-Prager model employing two quadratic functions over two intervals as given below.
\begin{align}
	r(y_1)= r_\mathrm{max}
	\left\{ \begin{array}{lll} 
		\D 1-\frac{\big(y_1 - y_0\big)^2}{\big(y_0 - y_\mathrm{min}\big)^2}\; &\text{for} & y_\mathrm{min}\leq y_1 \leq y_0 \\
		\\
		\D 1 - \frac{\big(y_1 - y_0\big)^2}{\big(y_\mathrm{max} - y_0 \big)^2}\; &\text{for} & y_0\leq y_1\leq y_\mathrm{max} \\ \\
		0 & \text{otherwise}
	\end{array} \right. ,
\end{align}
where $ y_\mathrm{min} = -0.058 $, $y_\mathrm{max} = 0.00107 $, $ y_0 = -0.0385$ and $r_\mathrm{max} = 0.016 $, see Fig.~\ref{fig-r(xi)}. We choose a hardening parameter $b=0.095$ to insure that we are within the regime of small values of $b$ treated in Sec.~\ref{sec5}.

\begin{figure}
	\centering
	\includegraphics[scale=1.2]{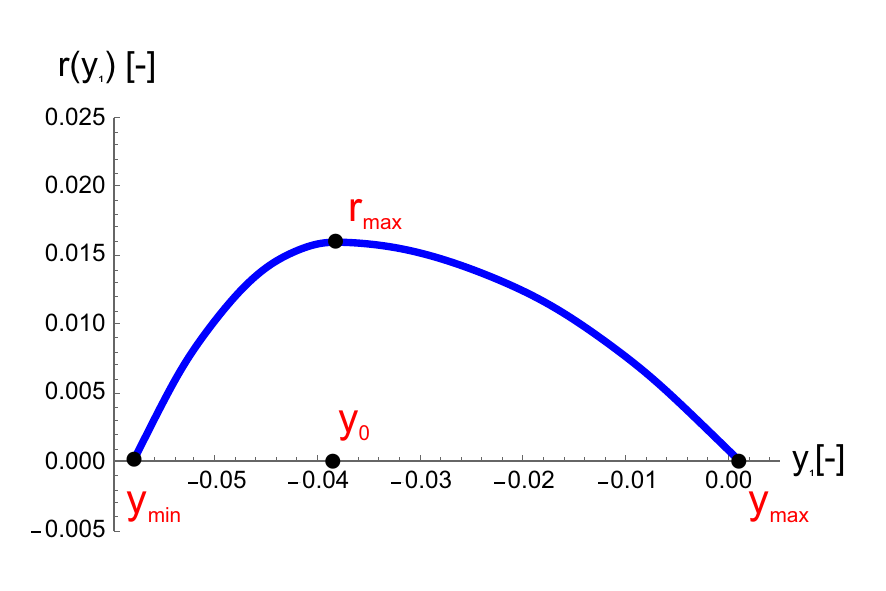} 
	\caption{Elastic region given by the function $r(y_1)$.}
	\label{fig-r(xi)}
\end{figure}

We model a one-dimensional specimen of length $L=1$, represented by a domain $\Omega = (0,1)$, employing a discretization using $n=80$ standard linear finite elements.
The specimen is fixed at the left-hand side, $u(0)=v(0)=0$, and subjected to given displacements $u(L)=u_\mathrm{ext}$ and $v(L)=v_\mathrm{ext}$ in $x$- and $y$-direction, respectively, at the right-hand side. The affine solution is given by $x\mapsto (u(x), v(x))=x(u_\mathrm{ext}, v_\mathrm{ext})$ with constant derivative $(u_{\mathrm{ext}},v_{\mathrm{ext}})$. We fix $u_{\mathrm{ext}}$ throughout our calculations and vary $v_{\mathrm{ext}}$ exploring four different regions in the phase diagramme Fig.~\ref{fig:constructionrelaxation} which correspond to the four panels in Fig.~\ref{fig-out1}: $(u_{\mathrm{ext}}, v_{\mathrm{ext}})$ in (a) $Y_1$, (b) $Y_2\cap\{y_2\leq r(y_1)\}$, (c) $Y_2\setminus\{y_2\leq r(y_1)\}$ and (d) $Y_3$, respectively.

The total energies are then functions of the vectors of nodal displacements $\bfm{u}=\{u_1,\ldots,u_{n+1}\}$, $\bfm{v}=\{v_1,\ldots,v_{n+1}\}$ with $u_1 = v_1 = 0$ and $u_{n+1} = u_\mathrm{ext}$, $v_{n+1} = v_\mathrm{ext}$, given as
\begin{equation} 
	\label{eq50}
	W_\mathrm{cond,rel}(\bfm{u},\bfm{v}) 
	=
	\sum_{j=1}^{n} \psi_\mathrm{cond,rel}\Big(\frac{u_{j+1} - u_j}{L/n},\frac{v_{j+1} - v_j}{L/n}\Big) \,\frac{L}{n} ,
\end{equation}
for the condensed and the relaxed energy, respectively. The minimization is performed using the \textit{Mathematica} function \textit{FindMinimum}, which employs a gradient-based local search. Note, that for the relaxed energy, the minimum is obtained for the affine displacements $u_i=\frac{i-1}{n} u_\mathrm{ext}$, $v_i=\frac{i-1}{n} v_\mathrm{ext}$. 

We add a random perturbation of amplitude $\alpha$ to the exact solution given above as an initial guess for the minimization procedure. Hence, let $\mathrm{rand}(i)$ be a random function possessing a uniform distribution within the interval $[-1,1]$. Then the initial guess is
\begin{multline}
\label{eq50a}
u^\mathrm{ini}_i = \frac{i-1}{n} u_\mathrm{ext} +\alpha \frac{L}{n} \mathrm{rand}(i), \quad v^\mathrm{ini}_i = \frac{i-1}{n} v_\mathrm{ext}+\alpha \frac{L}{n}\mathrm{rand}(n+i),  i=2,\ldots,n.
\end{multline}
Results are depicted in Fig.~\ref{fig-out1}. The numerical simulations reflect the predictions based on the phase diagram in Figure~\ref{fig:constructionrelaxation}.
In case (a) with $v_\mathrm{ext}\in [0,0.0036]$, both the relaxed and the condensed energy are convex with $f=f_{\mathrm{c}}$ and the corresponding minimizers are affine with gradient $(u_{\mathrm{ext}},v_{\mathrm{ext}})$. For larger values of $v_{\mathrm{ext}}\in [0.0036, 0.014]$ the function $f$ is locally convex but different from its relaxation $f_{\mathrm{c}}$ which is obtained by a mixture of two states and which is affine on lines parallel to the boundary of the region $Y^4$, see the red lines in the left panel in Figure~\ref{fig:constructionrelaxation}. Thus the calculation with the condensed energy is expected to discover the microstructure supported on two points while the calculation with $f_{\mathrm{c}}$ can use all the gradients on the affine part. In fact, the second panel in Figure~\ref{fig-out1} shows exactly this behavior. The case (c) corresponds to $v_{\mathrm{ext}}\in [ 0.014, 0.063]$ and shows the same  behavior of case (b), the only difference being that $f$ is not locally convex at the point $(u_{\mathrm{ext}},v_{\mathrm{ext}})$. Finally, for $v_{\mathrm{ext}}\in [0.0163, 0.15]$ the condensed energy $f$ is not locally convex and the relaxed energy $f_{\mathrm{c}}$ is affine on the triangle $Y_3$. Therefore the range of the gradient of the numerical solution for the calculation with the condensed energy is expected to be located in the three corners of the triangle while the gradient of the solution with the relaxed energy can explore the full triangle. The last panel in Fig.~\ref{fig-out1} confirms this prediction. 

The procedure employing the condensed energy does not recover the exact minimum, but produces values which are between 10\% and 20\% higher. However, the values are lower than the energy of the condensed energy evaluated at the affine solution. The approximate minimizer attempts to mimic the construction of the relaxed energy by concentrating points at the corresponding positions within the  $y_1$-$y_2$-plane.
The procedure employing the relaxed energy returns the exact minimum with working precision in all cases.

\begin{figure}
	\centering
	\includegraphics[scale=0.75]{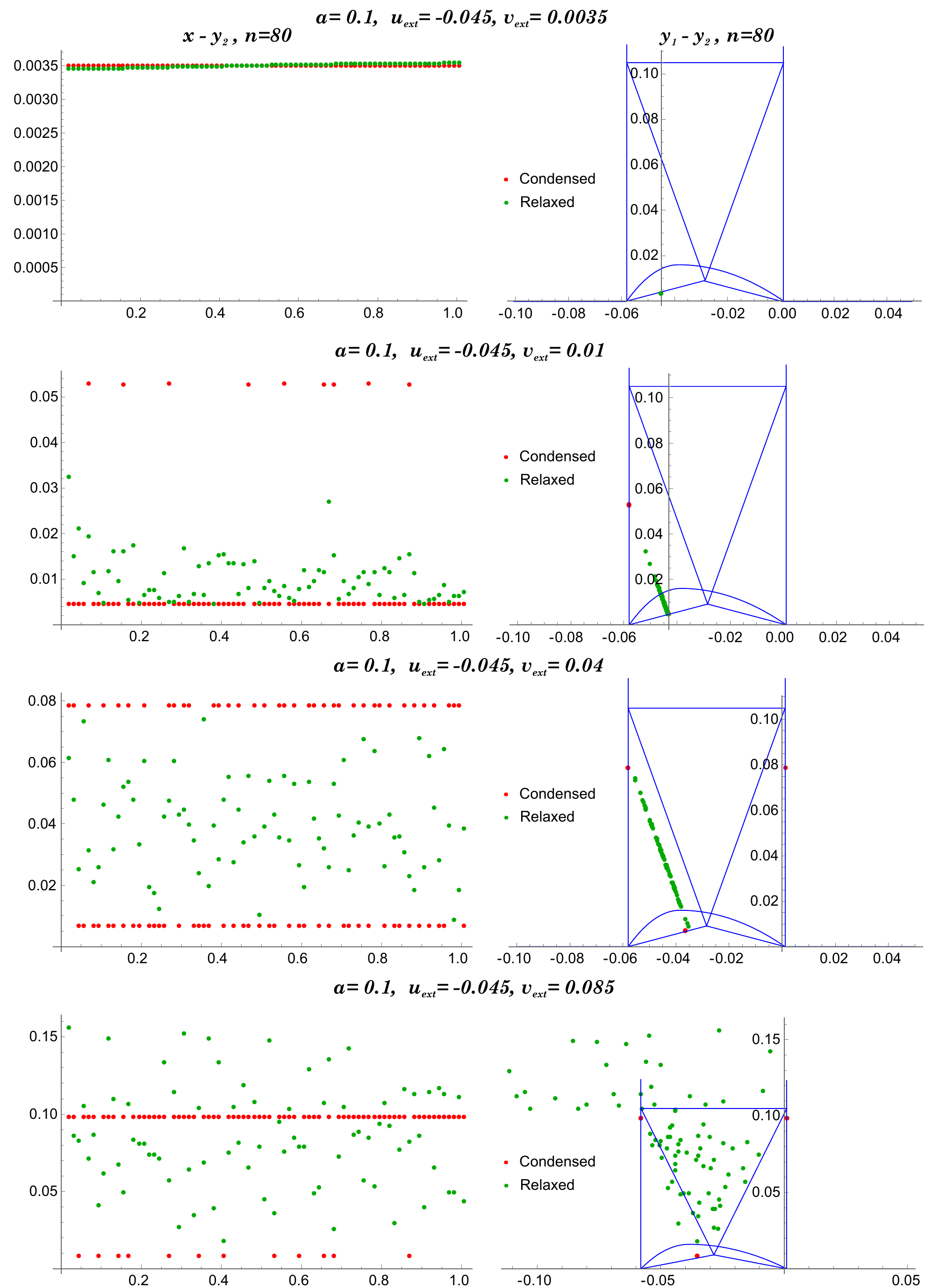}  
	\caption{Minimization with exact initial guess plus random perturbation for increasing boundary values $u_\mathrm{ext}$, $v_\mathrm{ext}$. Left column: $y_2=v^\prime(x)$ versus position $x$ of the beam, right column: points of the approximate minimizers $(y_1, y_2)$ within relaxation domains as depicted in Fig.~\ref{fig:constructionrelaxation}. Comparison from the condensed and relaxed energies for a mesh with 80 elements.}
	\label{fig-out1}
\end{figure}

\subsection{Three-dimensional model}

The generalization to a three-dimensional model derived in Sec.~\ref{sec6} is tested through a two-dimensional boundary value problem, namely the square plate with a circular hole.  The plate is fixed at the left-hand side and subject to displacements in compression in $x$-direction along the right-hand side increasing in time. Two unstructured meshes are computed, a coarse one with 1793 elements and a finer mesh with 4917 elements. The meshes are shown in Fig.~\ref{fig-Plate}. 
	
Finite element computations are performed using the finite element analysis program (FEAP). Standard hexahedral (8-Nodes) elements are used, they are restricted in the $3^{rd}$ direction employing plane strain conditions, which apply throughout all examples. Stresses are calculated as analytical derivatives of the energies with respect to the strains. Tangent operators are calculated as second derivatives of the energies with respect to the strains, numerically via perturbation. Equilibrium states are obtained by standard Newton-Raphson procedure.

\begin{figure}
	\centering
	\includegraphics[scale=0.54]{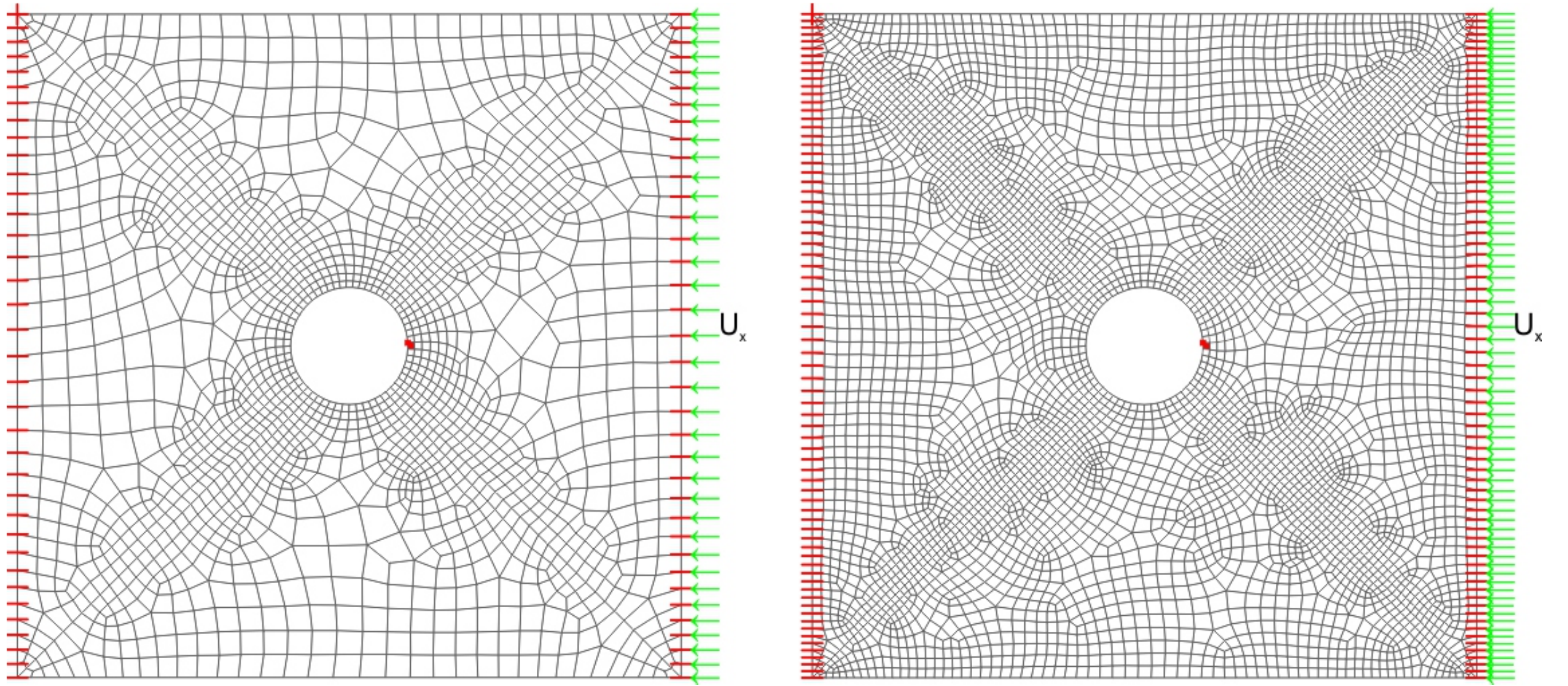}  
	\caption{A plate with a hole is fixed on the left and subject to a given displacement in $x$-direction on the right-hand side. The figure to the left shows the coarse mesh with 1793 elements and that to the right the fine mesh with 4917 elements. The elements employed for the stress plots are marked in red.}
	\label{fig-Plate}
\end{figure}

For the computations, we use the same material parameters as in the one-dimensional case taken from a modified Mohr-Coulomb material model with bulk modulus $K = 3.9 \,\text{GPa}$, shear modulus $\mu = 2.8\, \text{GPa}$, internal friction angle $\phi= 32^\circ$, cohesion $c= 25 \,\text{MPa}$, Poisson's ratio $\nu= 0.25$, and tensile strength $\sigma_\mathrm{t} = 5\, \text{MPa}$, see \cite{Shen2018criticalstate}. Under increasing load, the elements in the neighborhood of the hole start to plastify first. Therefore, we compare the results for the model employing the relaxed energy at the marked elements adjacent to the hole for both meshes. The values are taken for a specific integration point at these elements. We would like to stress, that a comparison with the model employing the condensed energy is not possible, as the lack of convexity causes severe numerical instabilities. Therefore, only the results for the relaxed model can be inferred from Fig.~\ref{fig-OutPlate}. 

\begin{figure}
	\centering
	\includegraphics[scale=0.76]{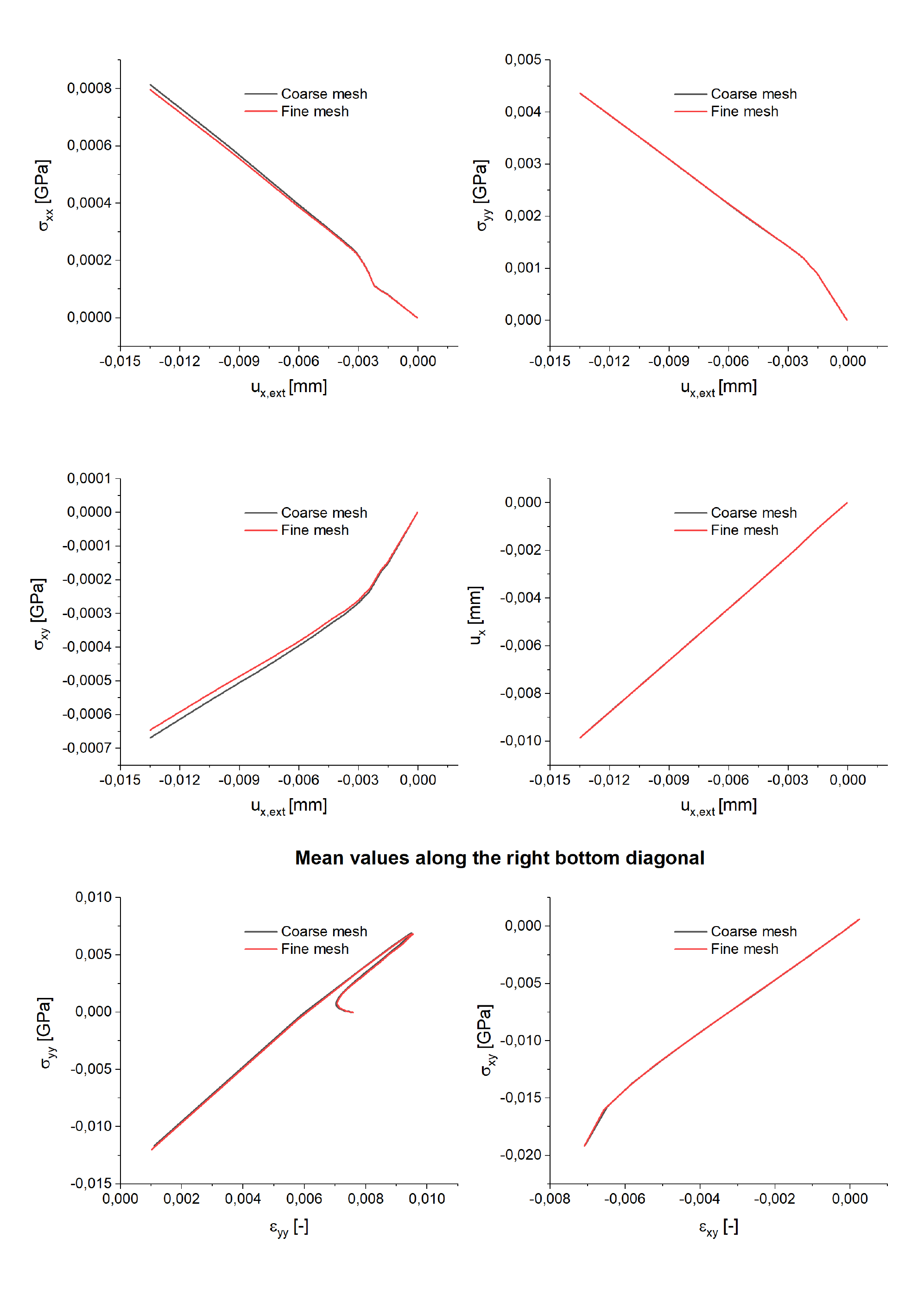}  
	\caption{Square plate with circular hole: from top left to bottom right: stresses $\sigma_{xx}$,  $\sigma_{yy}$, $\sigma_{xy}$ and displacement $u_x$ at the marked elements as a function of the external boundary displacement $u_{x,\mathrm{ext}}$, path of average stress $\sigma_{yy}$ versus average strain $\varepsilon_{yy}$, path of average stress $\sigma_{xy}$ versus average strain $\varepsilon_{xy}$}
	\label{fig-OutPlate}
\end{figure}

The final external displacement is reached employing 100 load steps. The corresponding strains switch
from domain ${Y_1}$ into domain ${Y_2}$, inducing an initiation of microstructure. It can be seen that the behavior of the displacement field and the stresses are almost identical for both the coarse and fine mesh, even after plastification. This demonstrates, that the relaxed model is mesh independent and capable of capturing the material behavior in different zones of relaxation.

The distribution of the stresses $\sigma_{xx}$,  $\sigma_{yy}$, $\sigma_{xy}$ can be seen in Fig.~\ref{fig-Contour}.  The relaxed model shows similar stress distribution for the coarse and fine mesh. Microstructure is initiating at the same positions and load step despite the different spatial discretization.

\begin{figure}
	\centering
	\includegraphics[scale=0.75]{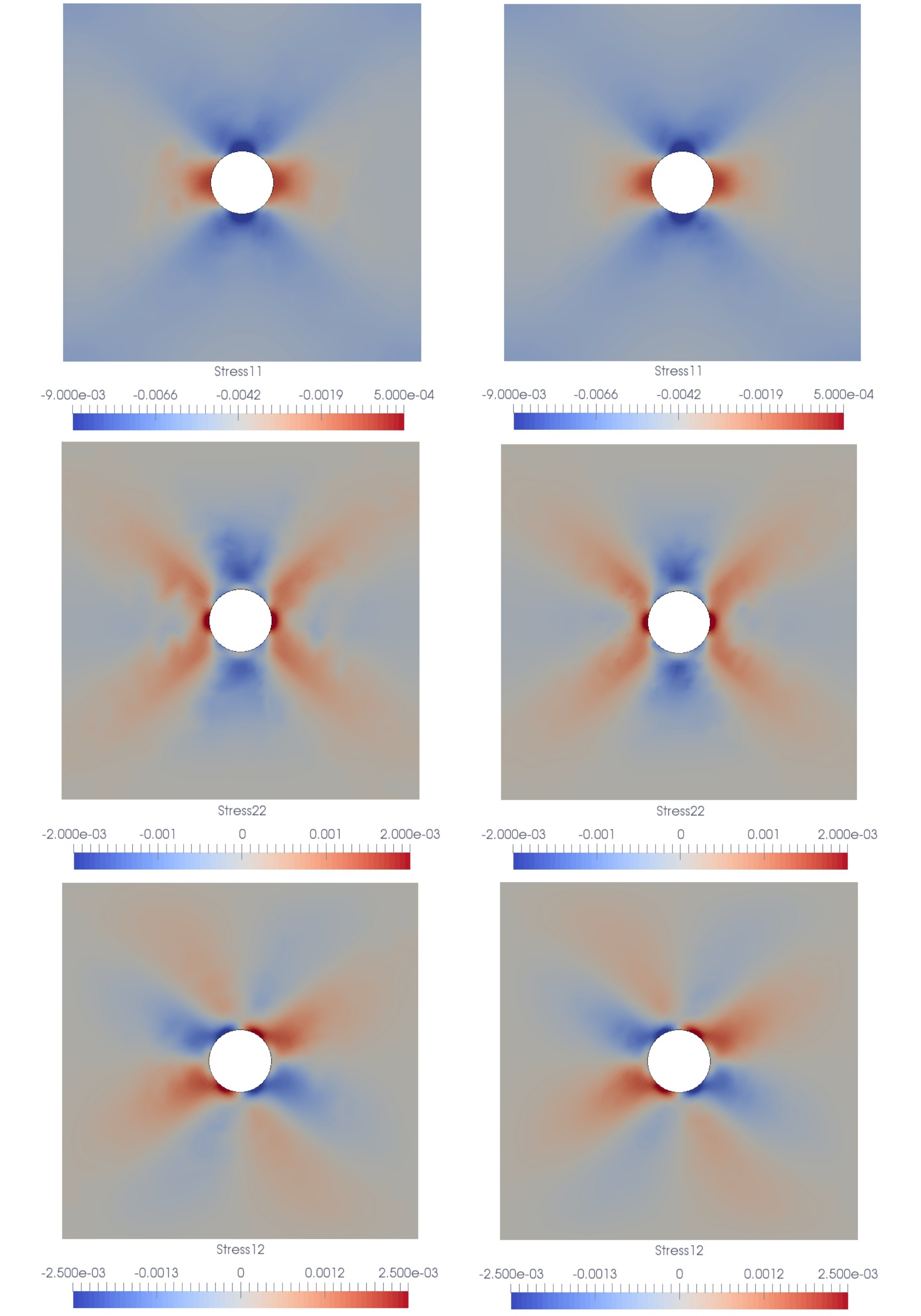}  
	\caption{Contours from the plate with a hole. Distribution of the stresses $\sigma_{xx}$,  $\sigma_{yy}$, $\sigma_{xy}$ is shown. Coarse mesh (left) and fine mesh (right).}
	\label{fig-Contour}
\end{figure}

\section{Conclusion and outlook}
\label{sec8}

We established a variational model for pressure-dependent plasticity as common in models of soil mechanics. Our approach employed a novel aspect by using a characteristic function as dissipation potential. That way, it was possible to establish a dissipation distance as well as a condensed energy connected to a time-incremental setting in a consistent manner. This allowed us to investigate the initiation of microstructure via relaxation theory by calculating the quasiconvex envelope of the condensed energy. For a one-dimensional model problem, for which the quasiconvex envelope coincides with the convex envelope, we succeeded in arriving at a closed-form expression. Interestingly, the quasiconvex envelope turned out to be largely independent of the yield surface of the original model for small values of the hardening parameter $b$. Numerical simulations confirmed the predictions of the analytical results and illustrated the qualitatively different behaviour of the variational models using the condensed and the relaxed energy, respectively. In particular the simulations employing the condensed energy succeeded in finding the necessary oscillations. 
It was possible to extend the one-dimensional formulation to higher dimensions in an empirical manner. Numerical simulations for a paradigmatic model problem provided strong evidence for the superior predictive power of the relaxed model.

However, problems left for future investigations are manifold. The characterization of the relaxed energy for large values of $b$ is a challenging task since the affine curves in Figure~\ref{fig:constructionrelaxation} bounding the domain with $f=f_\mathrm{c}$ intersect the graphs of the functions $\pm r$ and become nonlinear. It is expected that the qualitative behaviour for the relaxation involving at most three points in the phase diagram remains unchanged for an intermediate range of values for $b$ and that for sufficiently large values of $b$ two points will suffice.

From the mathematical side, the treatment of the higher-dimensional case is completely open. In fact, our heuristic model provides a guaranteed lower bound for the energy in the system but may, in general, fail to be the largest lower bound and thus may not be optimal. Concerning variational modeling, there are still problems to be solved connected to the non-associated character of pressure-dependent plasticity. Finally, the treatment of several successive time-increments is still open as well as establishing fully evolutionary models.

\bigskip

\textbf{Acknowledgment:} 
The authors gratefully acknowledge the funding by the German Research Foundation (DFG) within the Priority Program 2256 ``Variational Methods for Predicting Complex Phenomena in Engineering Structures and Materials'' within project 441211072/441468770 ``Variational modeling of pressure-dependent plasticity - a paradigm for model reduction via relaxation”.

\bigskip

\begin{appendix}

\section{Convex analysis}
The construction of $f_\mathrm{c}$ in Theorem~\ref{thm:RelaxationSmallb} was based on the construction of a candidate $f_\mathrm{a}$ and a decomposition of $\R^2$ into disjoint regions with the property that $f_\mathrm{a}$ was globally continuous and convex on the interior of the different regions. This appendix collects the results that are needed in order to prove that $f_\mathrm{a}$ is convex on $\R^2$. The proofs follow from classical results in convex analysis, see~\cite{Behr2023} for more information.

\begin{definition}
Suppose that $\Omega\subset \R^d$ is open and that $f\colon \Omega \to \R$. Then $f$ is said to be locally convex, if for all $x\in \Omega$ there exists an $r_x>0$ with $B(x,r_x)\subset \Omega$ and $f$ convex on $B(x,r_x)$. A vector $v\in \R^d$ is called a local subgradient for $f$ at $x\in \Omega$ if there exists an $r_x>0$ with $B(x, r_x)\subset\Omega$ and if for all $y\in B(x, r_x)$ the inequality $f(y) \geq f(x) + \scp{v}{y-x}$ holds. The set of all local subgradients is denoted by $\partial_\mathrm{loc}f(x)$ and referred to as the local subdifferential.
\end{definition}

The subdifferential in the sense of convex analysis is denoted by $\partial f(x)$. Then, by definition, $\partial f(x)\subset \partial_\mathrm{loc}f(x)$ and if $f$ is differentiable, then $\partial_\mathrm{loc}f(x) \subset \{ f'(x) \}$; however, the local subdifferential may be empty if $f$ the graph of $f$ does not lie above the tangent plane in a sufficiently small neighbourhood of $x$.

The construction of the convex envelope of the condensed energy relies on a local construction and the verification, that the local construction leads to a convex function. The proof uses the following results.

\begin{lemma}\label{lem:C1TranstionsConvex}
Fix $a$, $b\in \R$, $a<0<b$, and $f_a\in C^1([a,0];\R)$, $f_b\in C^1([0,b];\R)$ convex with $f_a(0)=f_b(0)$, $f_a'(0)=f_b'(0)$. Then $f=f_a \chi_{[a,0)} + f_b\chi_{[0,b]}\in C^1([a,b];\R)$ is convex. 
\end{lemma}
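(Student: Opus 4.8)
The plan is to first establish the $C^1$-regularity of the glued function $f$ and then to deduce convexity from the monotonicity of its derivative. For the regularity, I would observe that $f$ coincides with $f_a$ on $[a,0]$ and with $f_b$ on $[0,b]$, and that each piece is $C^1$ on its own interval, so the only point at which regularity could fail is the junction $x=0$. Continuity there is guaranteed by the hypothesis $f_a(0)=f_b(0)$, and since the one-sided derivatives agree, $f_a'(0)=f_b'(0)$, the function $f$ is differentiable at $0$ with $f'(0)=f_a'(0)=f_b'(0)$. As the two derivative branches $f_a'$ and $f_b'$ are continuous on their intervals and match at $0$, the derivative $f'$ is continuous on all of $[a,b]$, whence $f\in C^1([a,b];\R)$.

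For convexity, I would invoke the classical criterion that a continuously differentiable function on an interval is convex if and only if its derivative is nondecreasing, so it suffices to show that $f'$ is nondecreasing on $[a,b]$. On $[a,0]$ we have $f'=f_a'$, nondecreasing because $f_a$ is convex, and on $[0,b]$ we have $f'=f_b'$, nondecreasing because $f_b$ is convex. It then remains only to compare values across the interface: for $x\in[a,0]$ and $y\in[0,b]$ one has
\[
 f'(x)=f_a'(x)\le f_a'(0)=f_b'(0)\le f_b'(y)=f'(y),
\]
where the outer inequalities use the monotonicity of $f_a'$ and $f_b'$ and the central equality is exactly the matching condition $f_a'(0)=f_b'(0)$. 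Hence $f'$ is nondecreasing on the whole interval, and the criterion yields that $f$ is convex.

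The argument is entirely elementary and I do not expect a genuine obstacle: the whole content is the observation that the matching of first derivatives at the interface prevents a downward jump of $f'$ there, which is precisely what the hypothesis $f_a'(0)=f_b'(0)$ supplies. If one prefers to avoid the derivative criterion, I would instead argue directly that the tangent line of $f$ at any base point lies below the graph, splitting into the cases where the base point lies in $[a,0]$ or in $[0,b]$ and using the supporting-line property of each convex piece together with the derivative matching at $0$ to carry the tangent-line estimate across the junction; this reproduces the same conclusion without appealing to the monotone-derivative characterization.
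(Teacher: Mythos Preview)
Your proof is correct and is exactly the elementary classical argument the paper has in mind: the paper does not spell out a proof of this lemma but simply states that it follows from standard results in convex analysis, and the monotone-derivative criterion together with the $C^1$-matching at the interface is precisely that standard route.
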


\begin{lemma}\label{lem:ConvexUnionSets}
Suppose that $f\in C(\R^d)$, $B$, $C\subset \R^d$ open and disjoint, $C$ convex and  $f$ locally convex on $B\cup C$, $f\in C^1(\interior(B\cup \overline{C}))$. Then $f$ is locally convex on $\interior(B\cup \overline{C})$.
\end{lemma}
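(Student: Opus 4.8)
The plan is to prove local convexity pointwise. For $x\in B$ or $x\in C$ the conclusion is part of the hypothesis, so the only points that require work are those in $\partial C\cap\interior(B\cup\overline C)$. Note first that $\partial C\cap B=\emptyset$: any open neighbourhood of a boundary point of $C$ meets $C$, which would contradict $B\cap C=\emptyset$; hence inside a small ball the three sets $B$, $C$, $\partial C$ partition. Fix such an $x$ and a radius $r>0$ with $B(x,r)\subset\interior(B\cup\overline C)$, so that $f$ is $C^1$ on $B(x,r)$. Since the ball is convex, it suffices to show that $h(t):=f(\gamma(t))$ is convex on $[0,1]$ for every segment $\gamma(t)=(1-t)p+tq$ with $p,q\in B(x,r)$; this yields convexity of $f$ on $B(x,r)$ and hence local convexity at $x$.

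The core of the argument is the interplay between such a segment and the open convex set $C$. Because $C$ is convex and $\gamma$ is affine, $\gamma^{-1}(C)$ is an open subinterval $(t_1,t_2)\subseteq[0,1]$, and the line-segment principle for convex sets (the open segment joining a point of $\overline C$ to a point of $C$ lies in $C$) gives $\gamma^{-1}(\overline C)=[t_1,t_2]$ whenever $(t_1,t_2)\neq\emptyset$. Thus the segment meets $\partial C$ only at the at most two endpoints $t_1,t_2$, lies in $C$ on $(t_1,t_2)$, and lies in $B$ on $[0,t_1)\cup(t_2,1]$. On each of these relatively open subintervals the local convexity of $f$ on $B\cup C$ transports through the affine map $\gamma$ to local convexity of the one-dimensional function $h$; since a locally convex function on an interval is convex (Lemma~\ref{lem:LocallyConvexIsGloballyConvex} in dimension one), $h$ is convex on each of $[0,t_1]$, $[t_1,t_2]$, and $[t_2,1]$ by continuity. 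As $h\in C^1([0,1])$, being the composition of the $C^1$ function $f$ with an affine map, the one-sided derivatives match at $t_1$ and $t_2$, so Lemma~\ref{lem:C1TranstionsConvex}, applied once at each transition point, glues these pieces into a function that is convex on all of $[0,1]$.

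I expect the main obstacle to be the degenerate configuration in which $\gamma^{-1}(C)=\emptyset$ while $\gamma^{-1}(\overline C)$ is a nondegenerate interval: then the segment runs inside $\partial C$ over a whole interval (for instance when $x$ lies on a flat face of $\partial C$ and $\gamma$ is aligned with that face), and local convexity of $f$ is not assumed there, so the gluing argument does not apply directly. I would dispatch this by approximation. Such segments are non-generic, and since $C$ is open with nonempty interior, any segment in $B(x,r)$ can be approximated by segments $[p_n,q_n]\to[p,q]$ that are non-degenerate (each either enters $C$ or contacts $\overline C$ only at isolated points), for which the previous paragraph already yields convexity of $h_n=f\circ\gamma_n$. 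Passing to the limit, the continuity of $f$ preserves the convexity inequality $f((1-\lambda)p+\lambda q)\le(1-\lambda)f(p)+\lambda f(q)$, so $h$ is convex along the original segment as well. This establishes convexity of $f$ along every segment in $B(x,r)$, hence convexity of $f$ on the ball, and therefore local convexity of $f$ at $x$, completing the proof.
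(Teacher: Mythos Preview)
The paper does not supply a proof of this lemma; the appendix merely states that ``the proofs follow from classical results in convex analysis'' and refers to~\cite{Behr2023}. There is therefore no argument in the paper to compare your proposal against.

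Your argument is correct and is precisely the route suggested by the auxiliary lemmas the paper assembles. The reduction to one-dimensional convexity along segments, together with the observation that an affine parametrisation meets an open convex set $C$ in a single relatively open subinterval (so that $\partial C$ is hit at most at the two endpoints $t_1,t_2$), followed by $C^1$-gluing via Lemma~\ref{lem:C1TranstionsConvex} and the passage from local to global convexity on intervals via Lemma~\ref{lem:LocallyConvexIsGloballyConvex}, is the natural proof.

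The only point worth sharpening is the degenerate case you flag, where $\gamma^{-1}(C)=\emptyset$ but $\gamma^{-1}(\overline C)$ is a nondegenerate interval. Your approximation idea is valid and can be made explicit as follows. The case $C=\emptyset$ is trivial (then $\interior(B\cup\overline C)=B$), so assume $C\neq\emptyset$ and pick any $c\in C$ and any $t_0$ in the degenerate interval, so $\gamma(t_0)\in\partial C\subset\overline C$. Set $w=c-\gamma(t_0)$ and $p_n=p+\tfrac{1}{n}w$, $q_n=q+\tfrac{1}{n}w$; then the shifted segment satisfies $\gamma_n(t_0)=(1-\tfrac{1}{n})\gamma(t_0)+\tfrac{1}{n}c\in C$ by the line-segment principle, so $\gamma_n$ enters $C$ and falls under your non-degenerate case. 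Since $[p,q]$ is compact in the open ball $B(x,r)$, for large $n$ the shifted segment remains in $B(x,r)$, the previous paragraph yields convexity of $h_n=f\circ\gamma_n$, and continuity of $f$ passes the convexity inequality to the limit $h$. With this made explicit the proof is complete.
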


\begin{lemma}\label{lem:LocallyConvexLocalSubgradient}
Suppose that $d\geq 2$, $\Omega\subset \R^d$ is open and $f:\Omega \to \R$ is continuous. The following statements are equivalent:
\begin{itemize}
 \item [(i)] $f$ is locally convex in $x\in \Omega$;
 \item [(ii)] for $x\in \Omega$ there exists an $r_x>0$ with $B(x, r_x)\subset\Omega$ such that $\partial_\mathrm{loc}f(y)\neq \emptyset$ for all $y\in B(x, r_x)$.
\end{itemize}

\end{lemma}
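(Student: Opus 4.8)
The plan is to treat the two implications separately. The forward implication (i)$\Rightarrow$(ii) will be immediate from classical convex analysis, while the reverse implication (ii)$\Rightarrow$(i) carries the real content and will require a reduction to a one-dimensional maximum principle.

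For (i)$\Rightarrow$(ii) I would argue as follows. Assume $f$ is convex on some ball $B(x,r_x)\subset\Omega$. Since $f$ is finite and convex on the open convex set $B(x,r_x)$, it is a standard fact that its subdifferential is nonempty at every interior point $y$: there is $v$ with $f(z)\geq f(y)+\scp{v}{z-y}$ for all $z\in B(x,r_x)$. This inequality holds in particular on a small ball around $y$, so $v\in\partial_\mathrm{loc}f(y)$ and hence $\partial_\mathrm{loc}f(y)\neq\emptyset$ for every $y\in B(x,r_x)$. Thus (ii) holds with the same radius, and this direction needs no further work.

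For (ii)$\Rightarrow$(i), let $B=B(x,r_x)$ be a ball on which $\partial_\mathrm{loc}f$ is nonempty everywhere. First I would reduce the convexity of $f$ on $B$ to a scalar statement, using that $f$ is convex on the convex set $B$ if and only if its restriction to every line segment contained in $B$ is convex. So I fix $a,b\in B$, set $u=b-a$ and $\phi(t)=f(a+tu)$ for $t\in[0,1]$; the whole segment lies in $B$ by convexity of the ball. For $t_0\in[0,1]$ and $y=a+t_0u$, any $v\in\partial_\mathrm{loc}f(y)$ restricted to the line $\tau\mapsto y+\tau u$ yields the one-dimensional inequality $\phi(t)\geq\phi(t_0)+\scp{v}{u}(t-t_0)$ for $t$ near $t_0$. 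Hence $\phi$ is continuous with $\partial_\mathrm{loc}\phi(t_0)\neq\emptyset$ for every $t_0$, and everything reduces to the following scalar fact, which I expect to be the main obstacle.

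\emph{Auxiliary claim.} A continuous $\phi$ on an interval with $\partial_\mathrm{loc}\phi(t)\neq\emptyset$ for all $t$ is convex. The hard part will be proving this, and I would do it by contradiction via a connectedness argument. If $\phi$ fails to lie below the chord on some $[p,q]$, I subtract the affine chord $L$ to obtain $h=\phi-L$ with $h(p)=h(q)=0$ and $M:=\max_{[p,q]}h>0$ attained at an interior point. The set $S=\{t\in(p,q)\colon h(t)=M\}$ is nonempty and relatively closed by continuity. The decisive step is that $S$ is also \emph{open}: at $t_0\in S$ the value $M$ is a local maximum of $h$, while a local subgradient $w=v-L'$ of $h$ at $t_0$ forces $w(s-t_0)\leq0$ for $s$ on both sides of $t_0$, hence $w=0$ and $h\equiv M$ near $t_0$. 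Since $(p,q)$ is connected, $S=(p,q)$, contradicting $h(p)=0$; this proves the claim. Combining, $\phi$ is convex along every segment in $B$, so $f$ is convex on $B$ and thus locally convex at $x$, which completes (ii)$\Rightarrow$(i). I note that this reduction is dimension-free, so the hypothesis $d\geq 2$ is not essential to the argument and merely reflects the setting in which the lemma is applied in Theorem~\ref{thm:RelaxationSmallb}.
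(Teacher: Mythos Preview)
The paper does not supply a proof in the text; it only states that the proofs ``follow from classical results in convex analysis'' and defers the details to~\cite{Behr2023}. Your argument is correct and is precisely the natural classical route one would expect: existence of subgradients for a finite convex function gives (i)$\Rightarrow$(ii), and for (ii)$\Rightarrow$(i) you reduce to line segments and run a maximum-principle/connectedness argument to show that a continuous scalar function admitting a local supporting line at every point is convex. Your observation that the hypothesis $d\geq 2$ is not actually used in the proof is also correct.
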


\begin{lemma}\label{lem:GlueingConvexFunctions}
Suppose that $a_i$, $b_i\in \R$, $i=1,2$ with $a_1<b_1<a_2<b_2$ and that $f\in C^0([a_1, b_2])$ with $f|_{[a_1,a_2]}$, $f|_{[b_1,b_2]}$ convex. Then $f$ is convex. 
\end{lemma}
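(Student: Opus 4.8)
The plan is to reduce the statement to a one-point characterization of convexity by monotone one-sided derivatives, exploiting the fact that the two intervals do not merely touch but overlap in the nondegenerate interval $[b_1,a_2]$. First I would pass to the open interval $(a_1,b_2)$, since a continuous function that is convex on an open interval is automatically convex on its closure; the endpoints $a_1$ and $b_2$ therefore play no role.

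On the open interval $(a_1,a_2)$ the function $f$ is convex, so its right derivative $f'_+$ exists at every point and is nondecreasing; the same holds on $(b_1,b_2)$. Because $f'_+(x)$ depends only on the values of $f$ in an arbitrarily small right neighbourhood of $x$, the two definitions coincide on the overlap $(b_1,a_2)$, and hence $f'_+$ is a single well-defined function on all of $(a_1,b_2)$. The next step is to check that this global $f'_+$ is nondecreasing on $(a_1,b_2)$: for $s<t$ lying in a common subinterval this is already known, and in the only remaining case $s\in(a_1,b_1)$, $t\in(a_2,b_2)$ I would insert any point $c\in(b_1,a_2)$ of the overlap and chain $f'_+(s)\le f'_+(c)\le f'_+(t)$, the first inequality holding in $(a_1,a_2)$ and the second in $(b_1,b_2)$. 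Since a continuous function on an interval whose right derivative exists and is nondecreasing is convex, this concludes the argument.

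The only delicate point, and the main obstacle, is precisely the ``straddling'' configuration in which one argument lies strictly left of $b_1$ and the other strictly right of $a_2$, so that neither hypothesis interval contains both. This is exactly where the nondegeneracy $b_1<a_2$ of the overlap is essential: it supplies an interior point $c$ through which the slope (equivalently, subgradient) information can be transported from the left piece to the right piece. If the two intervals met only at a single point, one could not rule out a concave kink there and the conclusion would fail; the overlap removes this possibility because a full neighbourhood of every point of $[b_1,a_2]$ lies inside one of the convex pieces.

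As an alternative that avoids citing the derivative characterization, one may instead construct at each interior $x_0$ a global affine minorant of $f$ touching the graph at $x_0$: for $x_0$ in the overlap the common subgradient works on both pieces, while for $x_0\in(a_1,b_1)$ a supporting line of $f|_{[a_1,a_2]}$ is pushed past $a_2$ using that $f$ is genuinely convex in a neighbourhood of $a_2$, so that the slope of that line does not exceed $f'_+(a_2)$. Since $f$ then agrees with the supremum of these affine minorants, and such a supremum is convex with $f$ as an upper bound, $f$ is convex.
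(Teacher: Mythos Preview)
Your argument is correct. Note, however, that the paper does not actually supply a proof of this lemma: the appendix merely states that ``the proofs follow from classical results in convex analysis'' and points to~\cite{Behr2023}, so there is no in-paper argument to compare against.

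The reduction to monotonicity of the one-sided derivative $f'_+$ on $(a_1,b_2)$ is a clean way to organise the proof, and your identification of the nondegenerate overlap $(b_1,a_2)$ as the device that transports slope information between the two convex pieces is exactly the point of the lemma. One cosmetic remark: in your case distinction the ``only remaining case'' should be stated with weak inequalities, i.e.\ $s\le b_1$ and $t\ge a_2$; the boundary situations $s=b_1$ or $t=a_2$ are disposed of by the same chaining through some $c\in(b_1,a_2)$, since $b_1\in(a_1,a_2)$ and $a_2\in(b_1,b_2)$. Your alternative via global affine minorants is also valid, though as written it is somewhat more impressionistic than the derivative argument; if you were to include it, the step ``pushing the supporting line past $a_2$'' deserves one more sentence making explicit that the slope of any support line at $x_0\in(a_1,b_1)$ is at most $f'_+(c)$ for $c\in(b_1,a_2)$, and hence is also a subgradient for the right piece.
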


\begin{lemma}\label{lem:LocallyConvexIsGloballyConvex}
Suppose that $\Omega\subset\R^n$ is open and that $f\in C(\Omega)$ is locally convex. Then for any pair $a$, $b\in \Omega$ with $\{\lambda a + (1-\lambda)b\colon \lambda\in [0,1]\}\subset \Omega$, the convexity inequality holds on $[a,b]$. In particular, if $\Omega$ is convex, then $f$ is convex on $\Omega$. 
\end{lemma}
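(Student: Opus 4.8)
The plan is to reduce the $n$-dimensional statement to a one-dimensional one and then invoke the gluing result of Lemma~\ref{lem:GlueingConvexFunctions}. Fix $a,b\in\Omega$ with $\{(1-t)a+tb : t\in[0,1]\}\subset\Omega$, set $\gamma(t)=(1-t)a+tb$ and define $\phi(t)=f(\gamma(t))$ for $t\in[0,1]$. First I would check that $\phi$ is continuous and locally convex on $[0,1]$. Continuity is immediate since $f\in C(\Omega)$ and $\gamma$ is affine. For local convexity, fix $t_0\in[0,1]$; local convexity of $f$ supplies a ball $B(\gamma(t_0),r)\subset\Omega$ on which $f$ is convex. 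The preimage $\gamma^{-1}(B(\gamma(t_0),r))$ is a relatively open interval $I_{t_0}\ni t_0$ in $[0,1]$, and on $I_{t_0}$ the function $\phi$ is the restriction of the convex function $f|_{B(\gamma(t_0),r)}$ to an affine segment, hence convex. Thus $\phi$ is convex on a neighbourhood of each point of $[0,1]$, including the endpoints $0$ and $1$ where the neighbourhoods are one-sided.

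Next I would upgrade local to global convexity of $\phi$. Since $[0,1]$ is compact and $\{I_t\}_{t\in[0,1]}$ is an open cover, I extract a finite subcover and, passing to closed subintervals on which $\phi$ is convex, arrange a chain $J_1,\dots,J_m$ with $\bigcup_k J_k=[0,1]$, ordered so that each consecutive pair $J_k,J_{k+1}$ overlaps in a nondegenerate interval. Writing $J_1=[p_1,q_1]$ and $J_2=[p_2,q_2]$ with $p_1<p_2<q_1<q_2$, I apply Lemma~\ref{lem:GlueingConvexFunctions} with $a_1<b_1<a_2<b_2$ given by $p_1<p_2<q_1<q_2$: the hypotheses $\phi|_{[p_1,q_1]}$ and $\phi|_{[p_2,q_2]}$ convex are exactly what we have, and the conclusion is convexity of $\phi$ on $[p_1,q_2]$. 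Iterating this gluing step, at each stage pairing the already-established convex interval with the next $J_k$, yields convexity of $\phi$ on all of $[0,1]$.

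Convexity of $\phi$ on $[0,1]$ is precisely the convexity inequality along the segment: for every $\lambda\in[0,1]$ one has $f((1-\lambda)a+\lambda b)=\phi(\lambda)\le(1-\lambda)\phi(0)+\lambda\phi(1)=(1-\lambda)f(a)+\lambda f(b)$, and the same holds for any two points of the segment. If in addition $\Omega$ is convex, then every segment with endpoints in $\Omega$ lies in $\Omega$, so the inequality holds for all such pairs and $f$ is convex on $\Omega$.

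The main obstacle I anticipate is the bookkeeping in the second paragraph: organizing the finite subcover into a properly overlapping chain so that the strict ordering $a_1<b_1<a_2<b_2$ required by Lemma~\ref{lem:GlueingConvexFunctions} is met at every gluing step, together with the careful handling of the relatively open one-sided neighbourhoods at the endpoints $0$ and $1$. An alternative route that sidesteps this combinatorics is a connectedness argument: subtract the chord, assume the resulting locally convex function $\psi$ (with $\psi(0)=\psi(1)=0$) attains a strictly positive maximum, and use local convexity at a maximizer to show that the level set where $\psi$ equals its maximum is both open and closed in $[0,1]$, contradicting $\psi(0)=\psi(1)=0$.
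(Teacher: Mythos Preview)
Your proposal is correct and follows precisely the route the paper's structure suggests: the paper does not spell out a proof but merely refers to classical convex analysis and \cite{Behr2023}, yet the placement of Lemma~\ref{lem:GlueingConvexFunctions} immediately before this statement indicates that reduction to one variable followed by iterated gluing of overlapping convexity intervals is the intended argument. The bookkeeping you flag is genuinely the only point requiring care, and your remedy (discard intervals contained in others, order the remaining ones so that consecutive pairs overlap with strict endpoint inequalities) is standard and sufficient.
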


\begin{lemma}\label{lem:ConvexityOnConvexSubsetSufficient}
Suppose $\Omega\subset \R^d$ is convex and $f\in C(\R^d)$. If $f\big{|}_\Omega$ is convex and if there is a convex function $g:\R^d\to \R$ with $g\leq f$ on $\R^d$ and $g= f$ on $\R^d\setminus \Omega$, then $f$ is convex.
\end{lemma}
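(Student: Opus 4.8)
The plan is to reduce the statement to a one-dimensional gluing problem along line segments and to control the junction behaviour using the hypotheses $g\le f$ and $g=f$ off $\Omega$. Recall that $f\in C(\R^d)$ is convex if and only if its restriction to every segment is convex, so fix arbitrary $x,y\in\R^d$, set $\gamma(t)=(1-t)x+ty$ for $t\in[0,1]$, and define $\phi(t)=f(\gamma(t))$ and $\psi(t)=g(\gamma(t))$. Since $g$ is convex and $\gamma$ is affine, $\psi$ is convex on $[0,1]$; moreover $\phi\ge\psi$ everywhere on $[0,1]$, and $\phi=\psi$ at every $t$ with $\gamma(t)\notin\Omega$. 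Because $\Omega$ is convex and $\gamma$ is affine, the set $I=\{t\in[0,1]\colon\gamma(t)\in\Omega\}$ is an interval; if $I=\emptyset$ then $\phi=\psi$ is convex and we are done, so assume $I\neq\emptyset$ and write $a=\inf I$, $b=\sup I$, so that $(a,b)\subseteq I\subseteq[a,b]$.

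Next I would establish piecewise convexity of $\phi$. On $(a,b)$ one has $\gamma(t)\in\Omega$, hence $\phi=f|_\Omega\circ\gamma$ is convex there, and by continuity $\phi$ is convex on the closed interval $[a,b]$. On $[0,a)$ and on $(b,1]$ one has $\gamma(t)\notin\Omega$, so $\phi=\psi$ there, and again continuity of $\phi$ extends the equality $\phi=\psi$ to the endpoints; thus $\phi$ is convex on each of $[0,a]$ and $[b,1]$. Consequently $\phi$ is continuous on $[0,1]$ and convex on the three consecutive closed subintervals $[0,a]$, $[a,b]$, $[b,1]$, and in particular $\phi=\psi$ on $[0,a]\cup[b,1]$.

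The crux is to glue these pieces at the junctions $a$ and $b$, which amounts to checking that the one-sided derivatives of $\phi$ do not jump downward. These derivatives exist and are finite since $\psi$ is convex on all of $[0,1]$ and $a,b\in(0,1)$ (the boundary cases $a=0$ or $b=1$ simply delete a junction). At $a$ I would argue $\phi'_-(a)=\psi'_-(a)$ because $\phi=\psi$ on $[0,a]$, while from $\phi\ge\psi$ on $[a,b]$ together with $\phi(a)=\psi(a)$ the difference quotients satisfy $\frac{\phi(t)-\phi(a)}{t-a}\ge\frac{\psi(t)-\psi(a)}{t-a}$ for $t>a$, giving in the limit $\phi'_+(a)\ge\psi'_+(a)$. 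Using convexity of $\psi$, this yields $\phi'_-(a)=\psi'_-(a)\le\psi'_+(a)\le\phi'_+(a)$. Symmetrically, at $b$ one has $\phi'_+(b)=\psi'_+(b)$, and from $\phi\ge\psi$, $\phi(b)=\psi(b)$ the difference quotients for $t<b$ (divided by $t-b<0$) give $\phi'_-(b)\le\psi'_-(b)\le\psi'_+(b)=\phi'_+(b)$. Hence at both junctions the left derivative does not exceed the right derivative; combined with convexity on each of the three pieces this makes $\phi$ convex on $[0,1]$. As $x,y$ were arbitrary, $f$ is convex on $\R^d$.

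The main obstacle is precisely this junction analysis: the standard gluing results available here (Lemma~\ref{lem:GlueingConvexFunctions} requires overlapping intervals, and Lemma~\ref{lem:C1TranstionsConvex} requires a $C^1$ match) do not apply directly, since the three pieces meet only at the single points $a,b$ and $\phi$ need not be differentiable there. The nonsmooth comparison $\phi'_+(a)\ge\psi'_+(a)$ and $\phi'_-(b)\le\psi'_-(b)$, which is exactly where the hypotheses $g\le f$ and $g=f$ on $\R^d\setminus\Omega$ enter, is therefore the essential quantitative step; everything else is the routine reduction to segments and the standard fact that matching one-sided slopes glues convex pieces.
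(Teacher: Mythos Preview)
Your proof is correct. The reduction to line segments, the identification of the interval $I$ and its endpoints $a,b$, the piecewise convexity of $\phi$, and the junction analysis via one-sided derivatives are all sound; in particular, the inequalities $\phi'_+(a)\ge\psi'_+(a)$ and $\phi'_-(b)\le\psi'_-(b)$ are precisely where the hypotheses $g\le f$ and $g=f$ on $\R^d\setminus\Omega$ are used, and together with the convexity of $\psi$ they yield $\phi'_-\le\phi'_+$ at each junction, which is exactly what is needed to glue convex pieces meeting at a single point.

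There is nothing to compare against here: the paper does not give a proof of this lemma but merely states it in the appendix and refers to~\cite{Behr2023} for details, noting that the results ``follow from classical results in convex analysis.'' Your argument supplies such a classical proof directly. The only remark worth making is that the ``standard fact that matching one-sided slopes glues convex pieces'' which you invoke at the end is easily justified by observing that the right derivative $\phi'_+$ is then globally non-decreasing on $[0,1)$, which characterizes convexity of a continuous function; you have identified this as routine, and it is.
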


\end{appendix}

\bibliographystyle{acm}

\bibliography{biblio1}

\end{document}